\newtheorem{theorem}{Theorem}[section]
\newtheorem{proposition}[theorem]{Proposition}
\theoremstyle{definition}
\newtheorem{definition}[theorem]{Definition}
\newcommand{\bx}{\mathbf{x}}
\renewcommand{\div}{\ensuremath{\mathrm{div}}}
\title[Determining  measures and integrals of birational maps]{Detecting and determining preserved measures and integrals of birational maps}
\author[Celledoni, Evripidou, McLaren, Owren, Quispel and Tapley]{}
\subjclass{Primary: 34A45; Secondary: 37C10, 65L05, 70H05.}
 \keywords{Discrete integrability, Darboux polynomials, Preservation of measures and integrals, Kahan's method}
 \email{Elena.Celledoni@ntnu.no}
 \email{pamb0sd16@gmail.com}
 \email{D.McLaren@latrobe.edu.au}
 \email{Brynjulf.Owren@ntnu.no}
 \email{R.Quispel@latrobe.edu.au}
 \email{Benjamin.Kwanen.Tapley@dnb.no}
\thanks{$^*$Corresponding author: D. I. McLaren}
\begin{document}

\centerline{\scshape Elena Celledoni}
\medskip
{\footnotesize
 \centerline{Department of Mathematical Sciences, NTNU}
   \centerline{7491 Trondheim, Norway}
} 

\medskip

\centerline{\scshape Charalambos Evripidou
}
\medskip
{\footnotesize
 \centerline{Department of Mathematics and Statistics, University of Cyprus}
   \centerline{1678 Nicosia, Cyprus}
}

\medskip

\centerline{\scshape David I. McLaren$^{*}$}
\medskip
{\footnotesize
 \centerline{Department of Mathematical and Physical Sciences, LaTrobe University}
   \centerline{Bundoora, VIC 3083, Australia}
}

\medskip

\centerline{\scshape Brynjulf Owren}
\medskip
{\footnotesize
 \centerline{Department of Mathematical Sciences, NTNU}
   \centerline{7491 Trondheim, Norway}
}

\medskip

\centerline{\scshape G. R. W. Quispel}
\medskip
{\footnotesize
  \centerline{Department of Mathematical and Physical Sciences, LaTrobe University}
   \centerline{Bundoora, VIC 3083, Australia}
}

\medskip
\centerline{\scshape Benjamin K. Tapley}
\medskip
{\footnotesize
  \centerline{Department of Mathematical Sciences, NTNU}
   \centerline{7491 Trondheim, Norway}
}

\bigskip

 \centerline{(Communicated by the associate editor name)}


\begin{abstract}
In this paper we use the method of discrete Darboux polynomials to calculate preserved measures and integrals of rational maps. The approach is based on the use of cofactors and Darboux polynomials and relies on the use of symbolic algebra tools. Given sufficient computing power, most, if not all, rational preserved integrals can be found (and even some non-rational ones).
 We show, in a number of examples, how it is possible to use this method to both determine and detect preserved measures and integrals of the considered rational  maps, thus lending weight to a previous ansatz \cite{CEMOQTV}. Many of the examples arise from the Kahan-Hirota-Kimura discretization of completely integrable systems of ordinary differential equations.

\end{abstract}
\maketitle

\def\eqalign#1{\null\,\vcenter{\openup\jot \mathsurround=0pt \ialign{\strut
     \hfil$\displaystyle{##}$&$ \displaystyle{{}##}$\hfil \crcr#1\crcr}}\,}

\def\R{\mathbb{R}}

\section{Introduction}
“Most of science is a search for simple, stable properties that can answer questions which interest us.” (Quote from F. Wilczek’s book \cite{Wil}).
The above quote certainly applies to the study of ordinary differential equations (ODEs). Starting with Galileo, Kepler and Newton, and later luminaries such as Hamilton and Lagrange, the search for preserved first integrals  and preserved volume forms has a long and very distinguished history.
For the area of ordinary difference equations and mappings, by comparison, the corresponding search for preserved integrals and preserved volume forms is arguably still in its infancy and is mainly carried out within two subfields:  discrete integrable systems \cite{FV},  and numerical integration of ODEs, in particular geometric numerical integration \cite{HLW}. 
In the discrete integrable systems subfield, two of the earliest examples of the discovery of discrete maps preserving integrals are the 2D area-preserving McMillan map, preserving a polynomial integral \cite{McM}, subsequently generalized to the 2D measure-preserving QRT map, preserving a rational integral \cite{QRT1,QRT2}.
In the area of numerical discretization of ODEs, one of the earliest methods used is the birational Kahan-Hirota-Kimura discretization of first order quadratic ODEs \cite{celledoni18gai,HK,KH,K}, subsequently followed by other birational discretizations of ODEs of higher degree and/or higher order, such as polarization methods \cite{CMMOQpol} and the methods of Hone and Quispel \cite{HQ}.
In a recent Letter \cite{CEMOQTV}, we developed a theory for detecting and calculating integrals and preserved measures of rational maps, and presented three examples illustrating its use. 
The algorithm we use  is based on the notion of discrete Darboux polynomials and an accompanying cofactor, and it essentially only requires the solution of linear systems of equations.
The details of the algorithm are presented in the next section, culminating in Section 2.5.
In this paper we shall build further on the work of  Celledoni et al. \cite{CEMOQTV}. After introducing notation and preliminary results, such as the Kahan map, in section~\ref{preliminaries}, we shall
 present a total of 10 examples. In section 3, Examples 1 to 6 involve the discretisation of an ODE using Kahan's method or the Hone-Quispel method, whereas Example 7 comes from the area of discrete integrable systems. For all these discrete systems our method is used to determine a sufficient number of Darboux polynomials of the system. A specific one of these Darboux polynomials yields a preserved measure, whereas the set of all Darboux polynomials is used as building blocks to construct $k-1$ first integrals (where $k$ is the dimension of the discrete system). Most of these integrals turn out to be rational, but Example 5 exemplifies the construction of a non-rational integral using Darboux polynomials. In section 4, three additional examples are given where our method is extended to find specific parameters of a discrete system for which it has additional Darboux polynomials. These latter are also computed, as well as preserved measures and first integrals of the discrete systems.  Finally, we prove that for any quadratic Hamiltonian ODE, the modified Hamiltonian as well as the modified preserved measure of the Kahan map are both found using a particular ansatz for the choice of cofactor.

\section{Preliminaries}\label{preliminaries}

  In this section, we provide some preliminaries on Kahan's discretization, on measure preservation and superintegrability, on Darboux polynomials for ODEs and for discrete birational maps, and on the discrete cofactor ansatz that we will use in the remainder of the paper:

\subsection{Kahan's discretization}
Kahan \cite{K} proposed a numerical method designed for quadratic systems of ordinary differential
equations in $\mathbb{R}^n$ written in component form as
\begin{equation} \label{quadvectorfield}
\frac{d x_i}{d t} = \sum_{j,k} a_{ijk} x_j x_k + \sum_j b_{ij} x_j + c_i,\quad i=1,\ldots,n,
\end{equation}
where $a_{ijk}, b_{ij}, c_i$ are arbitrary constants and all summation indices are ranging from $1$ to $n$.
The method of Kahan, also known as the Hirota--Kimura discretization \cite{HK,KH}, is a one-step method
$(x_1,\ldots,x_n)\mapsto (x_1',\ldots,x_n')$ where
\begin{equation} \label{Kahansmethod}
   \frac{x_i'-x_i}{h} = \sum_{j,k} a_{ijk} \frac{x_j'x_k+x_jx_k'}{2} + \sum_j  b_{ij}\frac{x_j+x_j'}{2} + c_i,\quad i=1,\ldots,n,
\end{equation}
where $h$ denotes the discrete time step.
The method (\ref{Kahansmethod}) is linearly implicit and so is its inverse, hence it defines a birational map $\phi_h$.

  Much of the recent interest in Kahan's method stems from its ability to preserve modified first integrals and measures
of the underlying quadratic differential equation \cite{PPS,CMOQ, CMMOQ}. But even in cases where there are strong indications
that Kahan's method preserves such a nearby invariant, it is not necessarily an easy task to determine its closed form.

\subsection{Measure preservation and superintegrability}
Consider the ODE
\begin{equation}\label{ode_p2}
\frac{d\bx}{dt} = f(\bx), \quad \bx:=(x_1,x_2,\dots,x_n) \in \mathbb{R}^n.
\end{equation}

  \begin{proposition}  (Liouville, \cite{ARNOLD})
The smooth function $M: \mathbb{R}^n \rightarrow \mathbb{R}$ is the density of an invariant of the ODE (\ref{ode_p2}) iff 
$$\
\div(Mf)=0.
$$ 
\end{proposition}

\begin{definition} \cite{VKQTV}
  A vector field on $\mathbb{R}^n$ is superintegrable if it admits $n-1$ functionally independent constants of motion.
\end{definition}
We now consider the birational map $$\bx' = \phi(\bx),$$ with $\bx = (x_1,x_2,\dots,x_n) \mbox{ and } \bx' = (x'_1,x'_2,\dots,x'_n)$ elements of $\mathbb{R}^n$.
\begin{definition}
The map $\phi$ is measure preserving if there exists a smooth function $\Pi$ such that $$\Pi(\bx')J(\bx) =  \Pi(\bx),$$ where $J$ is the Jacobian determinant of $\phi$ $$J = \left| \frac{\partial \phi_i}{\partial x_j} \right|.$$ The preserved measure is given by $\Pi(\bx)\,d\bx$ where $d\bx= dx_1 \wedge \dots \wedge dx_n$.
\end{definition}
As mentioned in \S3 of \cite{VKQTV}, for a discrete map in $\mathbb{R}^n$, the existence of $n-1$ integrals is not enough to claim (super) integrability:
\begin{definition} \cite{VKQTV}
 An $n$-dimensional map is superintegrable if it has $n-1$ constants of motion and it is measure-preserving.
\end{definition}

\subsection{Darboux polynomials for ordinary differential equations}
For references on this topic, the reader is referred to Cheze \& Combot \cite{CandC} and references therein.

  Consider an ODE $\frac{d\bx}{dt}=f(\bx)$. The polynomial $\tilde{P}(\bx)$ is defined to be a Darboux polynomial of the ODE if there exists a polynomial function $\tilde{C}(\bx)$ s.t. $$\frac{d}{dt}\tilde{P}(\bx)= \tilde{C}(\bx) \tilde{P}(\bx).$$ Here $\frac{d}{dt}\tilde{P}(\bx)= f(\bx).\nabla \tilde{P}(\bx) $, and $\tilde{C}$ is called the cofactor of $\tilde{P}$.
Note that if
\begin{equation*}
\frac{d}{dt}\tilde{P}_i(\bx) = \tilde{C}_i(\bx) \tilde{P}_i(\bx), \qquad i=1,\dots,l, 
\end{equation*}
then
\begin{equation*}
\frac{d}{dt}\left(\prod_{i}^{} \tilde{P}_i^{\alpha_i}(\bx) \right) = \left( \sum_{i}^{} \alpha_i\tilde{C}_i(\bx) \right) \left(\prod_{i}^{} \tilde{P}_i^{\alpha_i}(\bx) \right), 
\end{equation*}
so that if 
$$
\sum_{i}^{} \alpha_i\tilde{C}_i(\bx) = 0,
$$ 
then 
$$
\prod_{i}^{} \tilde{P}_i^{\alpha_i}(\bx),$$
 is an integral of the vector field $f$.
We list some examples of cofactors and the geometric interpretation of the corresponding Darboux polynomials:
  \begin{enumerate}
  \item $\tilde{C}(\mathbf{\bx}) \equiv 0 \Rightarrow \dot{\tilde{P}}=0 \Rightarrow \tilde{P}$ is a first integral.
  \item $\tilde{C}(\mathbf{x}) \equiv \tilde{C} \Rightarrow \frac{d}{dt}{\tilde{P}}=\tilde{C}\tilde{P} \Rightarrow \tilde{P}(\mathbf{x}(t)) = \tilde{P}(\mathbf{x}(0)) e^{\tilde{C}t} \Rightarrow \tilde{P}$ defines a foliation.
  \item$\tilde{C}(\mathbf{x}) \equiv \div f(\mathbf{x})  \Rightarrow \tilde{P}$ defines the preserved measure $\frac{d\mathbf{x}}{\tilde{P}(\mathbf{x})}$ of the vector field $f$.
  \item $\tilde{C}(\mathbf{\bx}) \equiv \tilde{C}(\mathbf{\bx})$, i.e. the general case $\Rightarrow \tilde{P}(\bx)=0$ defines a second integral, i.e. an algebraic invariant hypersurface.
  \end{enumerate}
  
\subsection{Darboux polynomials for discrete rational maps}
Consider a rational map $\bx'=\phi(\bx)$. We define the polynomial $P(\bx)$ to be a (discrete) Darboux polynomial of the map $\phi$ if there exists a rational function $C(\bx)$ s.t. $P(\bx)$ satisfies the cofactor equation: 
\begin{equation}\label{cofeqn}
P(\bx') = C(\bx) P(\bx), 
\end{equation} 
where the form of $C(\bx)$ will be prescribed below.

  Note that if $$P_i(\bx') =  C_i(\bx) P_i(\bx), \quad i=1,\dots,k,$$ then 
$$
\left( \prod_{i}^{} P_i^{a_i}(\bx') \right) = \left( \prod_{i}^{} C_i^{a_i}(\bx) \right) \left( \prod_{i}^{} P_i^{a_i}(\bx)  \right), 
$$
so that if 
$$
	 \prod_{i}^{} C_i^{a_i}(\bx) \equiv 1,
$$ 
then 
$$
	 \prod_{i}^{} P_i^{a_i}(\bx),
$$ is an integral of the map $\phi$. We will see that this has the consequence that many foliations arising in this way are in factorised form.

We list some examples of discrete cofactors and the geometric interpretation of the corresponding  Darboux polynomials:
\begin{enumerate}
\item $C(\bx) \equiv 1 \rightarrow P'=P \rightarrow P$ is a first integral.
\item $C(\bx) \equiv C \rightarrow P'=CP \rightarrow P(\bx_n)=P(\bx_0)C^n \rightarrow P$ defines a foliation.
\item $C(\bx) \equiv J(\bx) \rightarrow P$ defines the preserved measure $\frac{d\bx}{P(\bx)}$ of the map $\phi$.
\item $C(\bx) \equiv C(\bx)$ i.e. the general case $\rightarrow P(\bx)=0$ defines a second integral, i.e. an algebraic invariant hypersurface. 
\end{enumerate}

  We remark that if the map $\phi$ contains a timestep $h$, such that $\displaystyle{\lim_{h \rightarrow 0} \frac{\phi - 1}{h} = f(\bx)}$, then in the continuum limit 
\begin{equation}\label{contlim}
\begin{aligned}
\tilde{P}(\bx) &= \lim_{h \rightarrow 0} P(\bx), \\
\tilde{C}(\bx) &= \lim_{h \rightarrow 0} \frac{C(\bx)-1}{h}, \\
\div(f(\bx)) &=  \lim_{h \rightarrow 0} \frac{J(\bx)-1}{h},  \\
\tilde{C} &= \lim_{h \rightarrow 0} \frac{C-1}{h}, 
\end{aligned}
\end{equation}
the (discrete) mapping case reduces to the (continuous) ODE case.

\subsection{Jacobian factor ansatz and the algorithm}
The remaining question is how to prescribe the form of the discrete cofactor $C_i(\bx)$. In \cite{CEMOQTV} we introduced the following ansatz: Given a rational map $\phi$ with Jacobian determinant 
$$
J(\bx) = \frac{\prod_{i=1}^{l}K_i^{b_i}(\bx)}{\prod_{j=1}^{k}D_j^{c_j}(\bx)},
$$
where the $K_i$ and $D_j$ are distinct factors, we try all cofactors (up to a certain polynomial degree $d_1$ for the numerator and $d_2$ for the denominator) of the form 
$$
C(\bx) = \pm \frac{\prod_{i=1}^{l}K_i^{f_i}(\bx)}{\prod_{j=1}^{k}D_j^{g_j}(\bx)},
$$
where $f_i, g_j \in \mathbb{N}_0$. This ensures that we need only check a finite number of cofactors. (Note that in maps arising from Kahan's discretization, it follows from (\ref{contlim}) that both $J$ and $C$ are $1+O(h)$).

  For each such cofactor we try all possible Darboux polynomials, (again up to a certain degree). 

  Given the cofactor $C$, the question of whether $P$ exists, and, if so, what it is, only requires solving the linear cofactor equation (\ref{cofeqn}).

\section{Determining preserved measures and integrals} \label{determine}
In this section we apply the aforementioned algorithm to determined preserved measures and integrals for 7 examples.
The algorithm has been implemented in Maple\footnote{Version of 2019 was used  and the codes are adapted to run on computing servers of up to 32 cores with up to 768 GBs of memory.}

\subsection{Example 1: finding measures and integrals of a specific 2D vector field}
This subsection exemplifies the preservation of a modified quadratic Darboux polynomial, and restates a result of \cite{CMOQ} in terms of Darboux polynomials. We study the following two-dimensional vector field
\begin{equation}\label{2Dvf}
\frac{d}{d t}\left( \begin{array}{c}
 x_1 \\ 
 x_2  
 \end{array} \right) 
 =
\left( \begin{array}{c} 
2x_1x_2 - 4x_2\\
-3x_1^2 - x_2^2 +4x_1 +1
  \end{array} \right) 
  .
\end{equation}
The Kahan discretization $\phi_h$ of this vector field is given by
\begin{equation} \label{2DKahan}
\begin{array}{lcl}
\displaystyle{x_1'} &=& \displaystyle{\frac{x_1 + h(2x_1x_2-4x_2) + h^2(2x_1^2-2x_2^2-3x_1-2)}{D({\bf x})}}, \\
\displaystyle{x_2'}  &=& \displaystyle{\frac{x_2 + h(-3x_1^2-x_2^2+4x_1+1) + h^2(4x_1x_2-5x_2)}{D({\bf x})}},
\end{array}
\end{equation}
where the common denominator $D({\bf x})$ is given by
$$D({\bf x}) = 1 + h^2(3x_1^2-x_2^2-8x_1+4).$$
The Jacobian determinant $J$ of the Kahan map (\ref{2DKahan}) is given by 
$$ J = C_1({\bf x}) C_2({\bf x}), $$
where
\begin{align*}
C_1({\bf x}) &= \frac{1+2hx_2+h^2(5-4x_1)}{D({\bf x})}, \\
C_2({\bf x}) &= \frac{1+ C_2^1 h +C_2^2 h^2 + C_2^3 h^3 + C_2^4 h^4}{D^2({\bf x})}.
\end{align*}
where
\begin{align*}
C_2^1 &= -2hx_2, \\
C_2^2 &= 7-20x_1+9x_1^2+x_2^2,  \\
C_2^3 &= 26x_2-16x_1x_2, \\
C_2^4 &= 28-28x_1+7x_1^2+3x_2^2.
\end{align*}
Defining $C_3:=J$, we have used cofactors $C_1, C_2$ and $C_3$ to find the corresponding Darboux polynomials for the Kahan map (\ref{2DKahan})
\begin{align*}
p_{1,1} &= x_1-2, \\
p_{2,1} &= 1-x_1^2-x_2^2 + h^2(\frac{13}{3} - \frac{16}{3}x_1 + x_2^2 + \frac{7}{3}x_1^2 ), \\
p_{3,1} &= 1 + h^2(3x_1^2-x_2^2-8x_1+4),  \\
p_{3,2} &= (x_1-2)\,(1-x_1^2-x_2^2 + h^2(\frac{13}{3} - \frac{16}{3}x_1 + x_2^2 + \frac{7}{3}x_1^2 )).
\end{align*}
Here and below, $p_{i,j}$ denotes the $jth$ Darboux polynomial corresponding to the cofactor $C_i$, i.e., $p_{i,j}$ satisfies 
\begin{equation*}
	p_{i,j}(\mathbf{x}') = C_i(\mathbf{x}) p_{i,j}(\mathbf{x}).
\end{equation*}

\begin{figure}[ht!]
\begin{center}
\includegraphics[width=8cm]{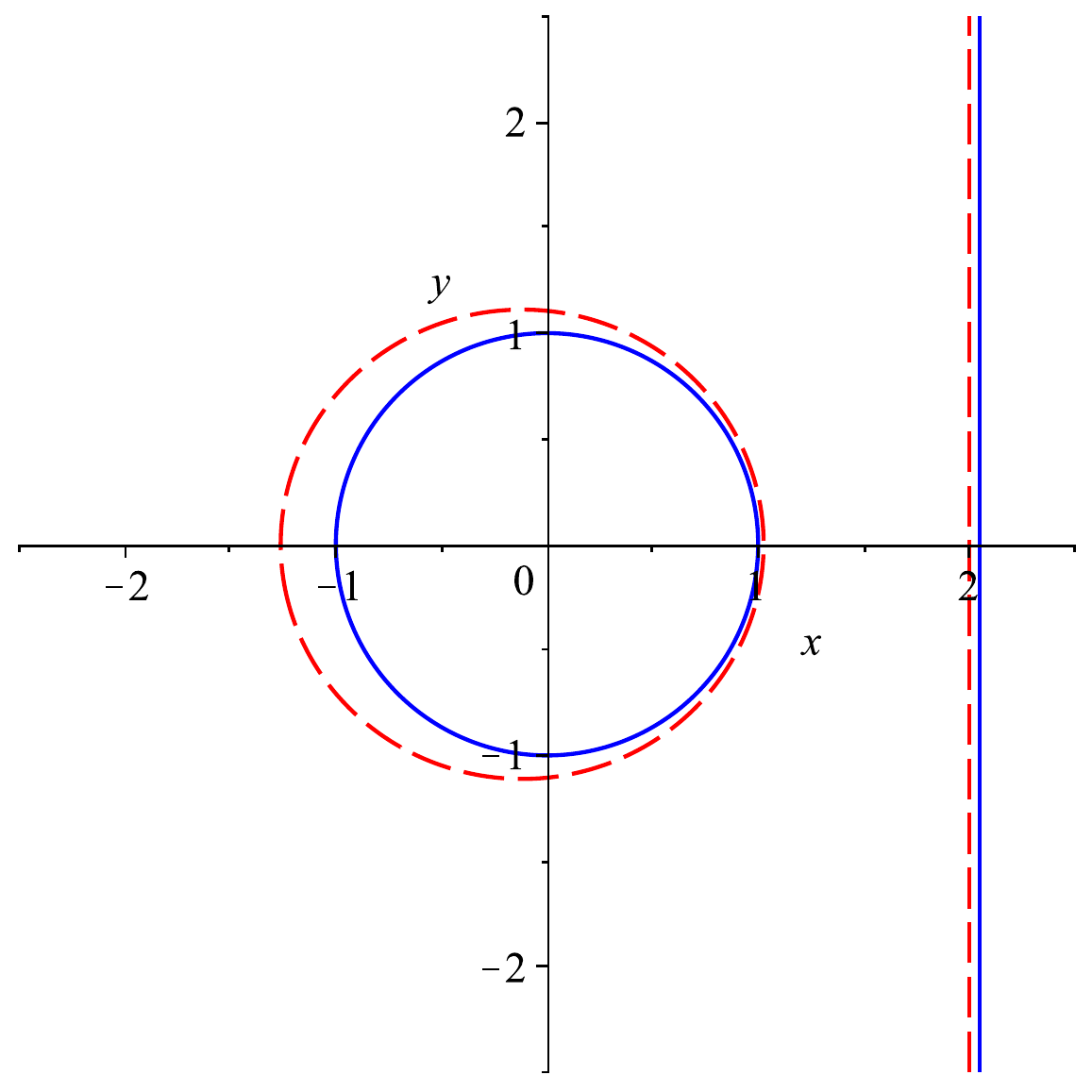}
\end{center}
\caption{Plots of the level sets $p_{1,1}=0$ and $p_{2,1}=0$ of the Kahan map in Example 1 (for $h=\frac{1}{5}$), dotted red. Also shown  are the corresponding second integrals of the ODE, $x_1-2$ and $1-x_1^2-x_2^2$, solid blue.}
\end{figure}

 Note also that here $p_{3,1}({\bf x}) \equiv D({\bf x})$. Hence it turns out that the Kahan map (\ref{2DKahan}) possesses the  Darboux polynomials $p_{1,1}({\bf x})$, $p_{2,1}({\bf x})$ and $p_{3,1}({\bf x})$, and also preserves the measure 
$$
\frac{d\bx }{1 + h^2(3x_1^2-x_2^2-8x_1+4)}. 
$$
 Finally, the Kahan map preserves the first integral
$$\frac{(x_1-2)\,(1-x_1^2-x_2^2 + h^2(\frac{13}{3} - \frac{16}{3}x_1 + x_2^2 + \frac{7}{3}x_1^2 ))}{1 + h^2(3x_1^2-x_2^2-8x_1+4)}.$$

  Taking the continuum limit $h \rightarrow 0$, we now see in hindsight that the vector field (\ref{2Dvf}) possesses two second integrals, i.e. $x_1-2$ and $1-x_1^2-x_2^2$\footnote{This is why the discrete Darboux polynomial $p_{2,1} = 1-x_1^2-x_2^2 + h^2(\frac{13}{3} - \frac{16}{3}x_1 + x_2^2 + \frac{7}{3}x_1^2 )$ is called a {\it modified} Darboux polynomial}. It also preserves the measure $dx_1\wedge dx_2$ and the first integral $H = (x_1-2 )(1-x_1^2-x_2^2)$. The fact that the affine Darboux polynomial $x_1-2$ is preserved by the Kahan map (\ref{2DKahan}) is an example of Theorem 1 of \cite{CEMOQTV}, which states that the Kahan discretization preserves all affine Darboux polynomials in any dimension. On the other hand, the vector field (\ref{2Dvf}) preserves the integral $H$ and the measure $d x_1\wedge d x_2$ implying that (\ref{2Dvf}) is a Hamiltonian vector field with cubic Hamiltonian $H$. Therefore, the fact that the Kahan method preserves a modified Hamiltonian $\tilde{H}$, and the modified densities $p_{3,1}$ and $p_{3,2}$ is a special case of the following theorem\footnote{Additional geometric properties of the Kahan map (\ref{2DKahan}) will be given in a forthcoming preprint by Gubbiotti, Quispel and McLaren.}:
\begin{theorem}
Let $H$ be cubic in $\mathbb{R}^n$, let $K$ be a constant rank $2l$ antisymmetric $n\times n$ matrix and let the vector field be given by $f=K\nabla H(\mathbf{x})$. Then:
\begin{itemize}
	\item[(i)] $\phi_h({\bf x})$ possesses the following two Darboux polynomials, both with cofactor\footnote{We remind the reader that $J$ is the Jacobian determinant of the Kahan map $\phi_h$.} $C_1({\bf x}) = J$:
	\begin{align*}
		p_{1,1} &= H({\bf x})\, \mathrm{det}\,(A({\bf x})) + \frac{1}{3}h \nabla H({\bf x})^t \mathrm{adj}(A({\bf x})) f({\bf x}),  \\
		p_{1,2} &= \mathrm{det}\,(A({\bf x})).
	\end{align*}
	Here $A({\bf x}) = \mathbb{I} - \frac{1}{2}hf'({\bf x})$, and $\mathrm{adj}(A)$ denotes the adjugate of $A$.
	\item[(ii)]moreover, the degree of $p_{1,2}$ is at most $2l$ and the degree of $p_{1,1}$ is at most $2l+3$. If $n=2l$ the degree of $p_{1,1}$ is at most $2l+1$. 
\end{itemize} 

\end{theorem}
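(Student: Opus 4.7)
The plan is to systematically exploit the Hamiltonian structure of Kahan's map $\phi_h$ applied to $f = K\nabla H$. Rearranging the Kahan equation yields the implicit relation $A(\mathbf{x})\mathbf{x}' = A_0\mathbf{x} + hc$, where $A_0 := I + \tfrac{h}{2} f'(0)$ and $c := f(0)$. From this follow two key identities: $h A(\mathbf{x})^{-1} f(\mathbf{x}) = \mathbf{x}' - \mathbf{x}$ (by direct algebraic manipulation) and, by implicit differentiation and collection of terms, the Jacobian formula $\phi_h'(\mathbf{x}) = A(\mathbf{x})^{-1}\widetilde A(\mathbf{x}')$ where $\widetilde A(\mathbf{x}) := I + \tfrac{h}{2} f'(\mathbf{x})$; hence $J(\mathbf{x}) = \det \widetilde A(\mathbf{x}')/\det A(\mathbf{x})$. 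Writing $f'(\mathbf{x}) = K H''(\mathbf{x})$ with $K$ skew and $H''$ symmetric, Sylvester's determinant identity $\det(I - \tfrac{h}{2}KH'') = \det(I - \tfrac{h}{2}H''K)$, combined with transposition and $(H''K)^T = -KH''$, yields $\det A(\mathbf{x}) \equiv \det \widetilde A(\mathbf{x})$; substituting gives $\det A(\mathbf{x}') = J(\mathbf{x}) \det A(\mathbf{x})$, establishing $p_{1,2} = \det A$ as a Darboux polynomial with cofactor $J$.

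Using $\mathrm{adj}(A) = \det(A)\,A^{-1}$ and $hA^{-1}f = \mathbf{x}' - \mathbf{x}$, one rewrites
\[ p_{1,1}(\mathbf{x}) = \det A(\mathbf{x}) \cdot \widetilde H(\mathbf{x}), \qquad \widetilde H(\mathbf{x}) := H(\mathbf{x}) + \tfrac{1}{3} \nabla H(\mathbf{x})^T(\mathbf{x}' - \mathbf{x}), \]
so it suffices to show $\widetilde H \circ \phi_h = \widetilde H$. This is the principal obstacle, and I would prove it in two sub-steps. (A) Contracting Kahan's polarisation $(\mathbf{x}' - \mathbf{x})/h = K\,\overline{\nabla H}(\mathbf{x},\mathbf{x}')$ with $\overline{\nabla H}$ and using $K$ skew gives the Kahan energy relation $H(\mathbf{x}') - H(\mathbf{x}) = \tfrac{1}{6} H'''[\delta,\delta,\delta]$ along trajectories, where $\delta = \mathbf{x}' - \mathbf{x}$ and $H'''$ is the constant third-order derivative tensor of the cubic $H$. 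Combined with Taylor expansions of $\nabla H$ about $\tfrac12(\mathbf{x}+\mathbf{x}')$, which yield $(\nabla H(\mathbf{x}) + \nabla H(\mathbf{x}'))^T \delta = \tfrac{1}{2} H'''[\delta,\delta,\delta]$, one deduces that $E(\mathbf{x}, \mathbf{x}') := H(\mathbf{x}) + \tfrac{1}{3} \nabla H(\mathbf{x})^T (\mathbf{x}' - \mathbf{x})$ satisfies the swap-symmetry $E(\mathbf{x},\mathbf{x}') = E(\mathbf{x}',\mathbf{x})$ on Kahan orbits, so that $\widetilde H(\mathbf{x}) = E(\mathbf{x},\phi_h(\mathbf{x})) = E(\phi_h(\mathbf{x}),\mathbf{x})$. (B) Reversibility $\phi_h^{-1} = \phi_{-h}$ gives $\phi_h(\mathbf{y}) - \phi_{-h}(\mathbf{y}) = h(A(\mathbf{y})^{-1} + \widetilde A(\mathbf{y})^{-1})\,f(\mathbf{y})$, and the identity $AK = K\widetilde A^T$ (a direct consequence of $(KH'')^T = -H''K$) gives $A^{-1}K = K\widetilde A^{-T}$ and $\widetilde A^{-1}K = K A^{-T}$, so that $M := (A^{-1} + \widetilde A^{-1})K$ satisfies $M^T = -M$. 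Since $f = K\nabla H$, one concludes $\nabla H(\mathbf{y})^T (\phi_h(\mathbf{y}) - \phi_{-h}(\mathbf{y})) = h\,\nabla H^T M \nabla H = 0$. Combining (A) and (B) and using $\mathbf{x} = \phi_{-h}(\mathbf{x}')$: $\widetilde H(\mathbf{x}') - \widetilde H(\mathbf{x}) = E(\mathbf{x}', \phi_h(\mathbf{x}')) - E(\mathbf{x}', \mathbf{x}) = \tfrac{1}{3}\nabla H(\mathbf{x}')^T(\phi_h(\mathbf{x}') - \phi_{-h}(\mathbf{x}')) = 0$, so $p_{1,1}(\mathbf{x}') = J(\mathbf{x}) p_{1,1}(\mathbf{x})$ follows immediately.

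For the degree bounds in (ii), factor $K = UV^T$ with $U, V \in \mathbb{R}^{n \times 2l}$ (possible since $\mathrm{rank}\,K = 2l$); Sylvester's lemma then gives $\det A(\mathbf{x}) = \det(I_{2l} - \tfrac{h}{2}\,V^T H''(\mathbf{x})\,U)$, a polynomial of degree at most $2l$ in $\mathbf{x}$. Hence $\deg p_{1,2} \le 2l$ and, via the $H\det A$ term, $\deg p_{1,1} \le 2l + 3$. In the non-degenerate case $n = 2l$, $K$ is invertible and one substitutes $\nabla H = K^{-1}f$ into $p_{1,1} = H\det A - \tfrac{h}{3}\,f^T K^{-1}\,\mathrm{adj}(A)\,f$; a careful analysis of the leading-order terms (consistent with the explicit low-dimensional examples) exposes cancellations that lower the bound to $2l + 1$.
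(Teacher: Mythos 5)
Your part (i) is correct and, unlike the paper's own proof --- which simply cites Propositions 4 and 5 of \cite{CMOQ} for the preservation of the modified energy $\tilde H=p_{1,1}/p_{1,2}$ and of the measure with density $\det A$ --- it is essentially self-contained. The chain of identities checks out: $A(\mathbf{x})\phi_h'(\mathbf{x})=\widetilde A(\mathbf{x}')$ from implicit differentiation of the polarised form; $\det A\equiv\det\widetilde A$ from Sylvester plus skew-symmetry of $K$, whence $\det A(\mathbf{x}')=J(\mathbf{x})\det A(\mathbf{x})$; the rewriting $p_{1,1}=\det A\cdot E(\mathbf{x},\phi_h(\mathbf{x}))$ via $hA^{-1}f=\mathbf{x}'-\mathbf{x}$; the swap symmetry $E(\mathbf{x},\mathbf{x}')=E(\mathbf{x}',\mathbf{x})$ on orbits, which follows from $H(\mathbf{x}')-H(\mathbf{x})=\tfrac16H'''[\delta,\delta,\delta]$ and $(\nabla H(\mathbf{x})+\nabla H(\mathbf{x}'))^T\delta=\tfrac12H'''[\delta,\delta,\delta]$ (both consequences of $\overline{\nabla H}(\mathbf{x},\mathbf{x}')^T\delta=0$); and the antisymmetry of $(A^{-1}+\widetilde A^{-1})K$ from $AK=K\widetilde A^T$. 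What this buys is a direct derivation of the results the paper outsources, at the cost of re-proving Propositions 4 and 5 of \cite{CMOQ}; what the paper's route buys is brevity and an explicit pointer to where the degree bounds are established.

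Part (ii) is where you have genuine gaps. A minor one first: your bound $\deg p_{1,1}\le 2l+3$ accounts only for the summand $H\det A$; the summand $\tfrac{h}{3}\nabla H^T\mathrm{adj}(A)f$ involves $\mathrm{adj}(A)$, whose entries a priori have degree $n-1$, which can exceed $2l-1$ when $n>2l$. This is fixable by the same rank factorisation: from $A^{-1}U=U\bigl(I_{2l}-\tfrac h2V^TH''U\bigr)^{-1}$ one gets $\mathrm{adj}(A)K=U\,\mathrm{adj}\bigl(I_{2l}-\tfrac h2V^TH''U\bigr)V^T$, so that summand has degree at most $2+(2l-1)+2$; but you need to say this. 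The serious gap is the case $n=2l$: ``a careful analysis of the leading-order terms exposes cancellations'' is an assertion, not an argument, and it is exactly the nontrivial content of that part of the statement. The cancellation at degree $2l+3$ does hold: writing $H_3$ for the homogeneous cubic part of $H$ and $B=-\tfrac h2KH_3''$, the top term of $\tfrac h3\nabla H^T\mathrm{adj}(A)K\nabla H$ is $-\tfrac23\det(B)\,\nabla H_3^T(H_3'')^{-1}\nabla H_3=-H_3\det(B)$ by Euler's identities $H_3''\mathbf{x}=2\nabla H_3$ and $\mathbf{x}^T\nabla H_3=3H_3$, which cancels the top term $H_3\det(B)$ of $H\det A$. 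But one must also show the degree-$(2l+2)$ terms cancel, and you give no argument for either step. As written, the $n=2l$ bound is unproven; the paper covers it by citing Proposition 4(i) of \cite{CMOQ}.
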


\begin{proof}
In the proof of Proposition 4 in \cite{CMOQ}, it is shown that $\phi_h$ possesses the modified integral $\tilde{H} = \frac{p_{1,1}}{p_{1,2}}$. Proposition 5 in \cite{CMOQ} is equivalent to the statement that $p_{1,2}$ is a Darboux polynomial with cofactor $J$. Combining these two results, it follows that $p_{1,1}$ is also a Darboux polynomial with cofactor $J$. Part \textit{(ii)} follows from proposition 4\textit{(i)} in \cite{CMOQ}.

\end{proof}

\subsection{Example 2: An inhomogeneous Nambu system}

We consider the following inhomogeneous Nambu system belonging to the class of systems considered in \cite{celledoni18gai}
\begin{equation}
\frac{d}{dt}\left( \begin{array}{c} x_1 \\ x_2 \\ x_3 \end{array} \right) =
\left( \begin{array}{c} 20\,x_{{1}}x_{{2}}+8\,x_{{1}}x_{{3}}+30\,{x_{{2}}}^{2}+12\,x_{{2}}x_{{
3}}+32\,x_{{1}}+48\,x_{{2}}
 \\-10\,x_{{1}}x_{{2}}-4\,x_{{1}}x_{{3}}-20\,{x_{{2}}}^{2}-8\,x_{{2}}x_{{
3}}-16\,x_{{1}}-32\,x_{{2}}
 \\16\,x_{{1}}x_{{2}}+10\,x_{{1}}x_{{3}}+32\,{x_{{2}}}^{2}+20\,x_{{2}}x_{
{3}}+10\,x_{{1}}+20\,x_{{2}}
 \end{array} \right) 
 ,
\end{equation}
which has the two integrals
\begin{align*}
H_1 &:= x_1^2 + 4x_1x_2 + 3x_2^2,\\
H_2 &:= 4x_2^2 + 5x_2x_3 + x_3^2 + 5x_2 + 8x_3,
\end{align*}
and the preserved measure
$$
\int dx_1\wedge dx_2\wedge  dx_3.
$$
 We consider the Kahan discretization of these equations.
The corresponding Jacobian determinant has the irreducible factorization
\begin{equation}
J= \frac{K_1({\bf x})K_2({\bf x})K_3({\bf x})K_4({\bf x})}{D({\bf x})^4}, 
\end{equation}
where
\begin{align*}
K_1 &= 1+ \left( 10\,x_{{2}}+4\,x_{{3}}+16 \right) h+ \left( -9\,{x_{{1}}}^{2
}-54\,x_{{1}}x_{{2}}-56\,{x_{{2}}}^{2}+20\,x_{{2}}x_{{3}} \right. \\
 & \left. +4\,{x_{{3}}}
^{2}-60\,x_{{1}}-70\,x_{{2}}+32\,x_{{3}}+64 \right) {h}^{2}+ \left( -
150\,x_{{1}}x_{{2}}-60\,x_{{1}}x_{{3}} \right. \\
& \left. -300\,{x_{{2}}}^{2}-120\,x_{{2}}
x_{{3}}-240\,x_{{1}}-480\,x_{{2}} \right) {h}^{3},
 \\
K_2 &= 1+ \left( -6\,x_{{1}}-12\,x_{{2}} \right) h+ \left( 9\,{x_{{1}}}^{2}+
36\,x_{{1}}x_{{2}}+32\,{x_{{2}}}^{2}-8\,x_{{2}}x_{{3}}-4\,{x_{{3}}}^{2} \right. \\
 & \left.-62\,x_{{2}}-32\,x_{{3}}-64 \right) {h}^{2}+ \left( 150\,x_{{1}}x_{{2
}}+60\,x_{{1}}x_{{3}}+300\,{x_{{2}}}^{2}+120\,x_{{2}}x_{{3}} \right. \\
 & \left. +240\,x_{{
1}}+480\,x_{{2}} \right) {h}^{3},
 \\
K_3 &= 1+ \left( 6\,x_{{1}}+12\,x_{{2}} \right) h+ \left( 9\,{x_{{1}}}^{2}+36
\,x_{{1}}x_{{2}}-28\,{x_{{2}}}^{2}-32\,x_{{2}}x_{{3}}-4\,{x_{{3}}}^{2} \right. \\
 & \left. 
-158\,x_{{2}}-32\,x_{{3}}-64 \right) {h}^{2}+ \left( 150\,x_{{1}}x_{{2
}}+60\,x_{{1}}x_{{3}}+300\,{x_{{2}}}^{2}+120\,x_{{2}}x_{{3}} \right. \\
 & \left. +240\,x_{{
1}}+480\,x_{{2}} \right) {h}^{3},
 \\
K_4 &= 1+h \left( -10\,x_{{2}}-4\,x_{{3}}-16 \right) + \left( -9\,{x_{{1}}}^{
2}-18\,x_{{1}}x_{{2}}+16\,{x_{{2}}}^{2}+20\,x_{{2}}x_{{3}} \right. \\
 & \left. +4\,{x_{{3}}
}^{2}+60\,x_{{1}}+170\,x_{{2}}+32\,x_{{3}}+64 \right) {h}^{2}+ \left( 
-150\,x_{{1}}x_{{2}}-60\,x_{{1}}x_{{3}} \right. \\
&  \left. -300\,{x_{{2}}}^{2}-120\,x_{{2}
}x_{{3}}-240\,x_{{1}}-480\,x_{{2}} \right) {h}^{3},
 \\
D &= 1+ \left( -9\,{x_{{1}}}^{2}-36\,x_{{1}}x_{{2}}-70\,{x_{{2}}}^{2}-20\,x
_{{2}}x_{{3}}-4\,{x_{{3}}}^{2}-110\,x_{{2}}-32\,x_{{3}}-64 \right) {h}
^{2} \\
&  + \left( 90\,x_{{1}}{x_{{2}}}^{2}+36\,x_{{1}}x_{{2}}x_{{3}}+180\,{
x_{{2}}}^{3}+72\,{x_{{2}}}^{2}x_{{3}}+294\,x_{{1}}x_{{2}}+60\,x_{{1}}x
_{{3}} \right. \\
 & \left. +588\,{x_{{2}}}^{2}+120\,x_{{2}}x_{{3}}+240\,x_{{1}}+480\,x_{{2}
} \right) {h}^{3}.
\end{align*}
Defining cofactors $C_i = K_i/D, i=1\dots4$, we find the following Darboux polynomials $p_{i,1}$:
\begin{align*}
p_{1,1} &= x_{{1}}+x_{{2}}, \\
p_{2,1} &= 4\,x_{{2}}+x_{{3}}+9, \\
p_{3,1} &= x_{{2}}+x_{{3}}-1, \\
p_{4,1} &= x_{{1}}+3\,x_{{2}}.
\end{align*}
The cofactor $C_5=\frac{K_1K_4}{D^2}$  gives the Darboux polynomials $p_{5,1},p_{5,2}$:
\begin{align*}
p_{5,1} &=  \left( x_{{1}}+3\,x_{{2}} \right)  \left( x_{{1}}+x_{{2}} \right),  \\
p_{5,2} &= \left( 5\,hx_{{2}}+2\,hx_{{3}}+8\,h+1 \right)  \left( 5\,hx_{{2}}+2\,
hx_{{3}}+8\,h-1 \right),
\end{align*}
and the  cofactor $C_6=\frac{K_2K_3}{D^2}$  gives the Darboux polynomials $p_{6,1},p_{6,2}$:
\begin{align*}
p_{6,1} &=  - \left( 3\,hx_{{1}}+6\,hx_{{2}}+1 \right)  \left( 3\,hx_{{1}}+6\,hx_{
{2}}-1 \right),  \\
p_{6,2} &= (x_{{2}}+x_{{3}}-1)(4\,x_{{2}}+x_{{3}}+9).
\end{align*}
From these we obtain that the preserved integrals of the Kahan map are 
$ \frac{p_{5,1}({\bf x})}{p_{5,2}({\bf x})}$, $\frac{p_{6,2}({\bf x})}{p_{6,1}({\bf x})} $, 
and any combination
\begin{equation*} 
 \frac{1}{p_{5,i}({\bf x})p_{6,j}({\bf x})} d{\bf x},  \qquad  i,j\in\{1,2\},
\end{equation*}
is a preserved measure.

\subsection{Example 3: Quartic Nahm system in 2D} 
This subsection exemplifies the Kahan discretization of a certain class of ODEs with quartic Hamiltonians.
We consider the following example whose Kahan discretization was studied in \cite{PPS}
\begin{equation}
\frac{d}{dt}\left( \begin{array}{c} x_1 \\ x_2  \end{array} \right) =
\left( \begin{array}{c} 2x_1^2 - 12x_2^2 \\ -6x_1x_2 - 4x_2^2  \end{array} \right). 
\end{equation}
This ODE has a preserved integral
\begin{equation*}
H := x_2(2x_1 + 3x_2 )(x_1-x_2)^2, 
\end{equation*}
and a preserved measure
\begin{equation*}
 \int \frac{dx_1\wedge dx_2}{x_2(2x_1 + 3x_2)(x_1-x_2)}.
\end{equation*}
The Jacobian determinant of the Kahan discretization has the following factorization 
\begin{equation*}
J =\frac{K_1\, K_2\, K_3}{D^3}, \nonumber
\end{equation*}
where the three affine $K_i$ are given by
\begin{align*}
K_1 &:= 1 + 3hx_1 - 8hx_2, \\
K_2 &:= 1 - 5hx_1, \\
K_3 &:= 1 + 3hx_1 + 12hx_2,
\end{align*}
and the quadratic $D$ is
\begin{equation*}
D := 1 + hx_1 + 4hx_2 - 6h^2x_1^2 - 8h^2x_1x_2 - 36h^2x_2^2.  
\end{equation*}
Among the cofactors $K_1^iK_2^jK_3^k/D^l$ for $i,j,k=0,1$ and $l=1,\dots,3$ we consider $C_1 = \frac{K_1}{D}$, $C_2 = \frac{K_2}{D}$, and $C_3 = \frac{K_3}{D}$, satisfying $J = C_1C_2C_3$. The corresponding Darboux polynomials are
\begin{align*}
p_{1,1} &= 2x_1 + 3x_2, \\
p_{2,1} &= x_2, \\ 
p_{3,1} &= x_2 - x_1,   
\end{align*}
leading to the preserved measure
 \begin{equation*}
\frac{dx_1\wedge dx_2 \wedge dx_3}{p_{1,1}p_{2,1}p_{3,1}}.
\end{equation*}
To find the modified integral, we search for Darboux polynomials whose cofactors are of the form  $C_1^iC_2^j$ for $i,j=1,2,...$ (i.e., ``super-factors" of $J$). Using the cofactor 
\begin{equation*}
C_4 := C_1 C_2 C_3^2,
\end{equation*}
we find
\begin{align*}
p_{4,1} &= x_2(2x_1 + 3x_2)(x_1-x_2)^2, \\
p_{4,2} &= (1+h(3x_1+2x_2)) (1-h(3x_1+2x_2)) (1+h(6x_2-x_1)) (1-h(6x_2-x_1)),
\end{align*}
and $\frac{p_{4,1}}{p_{4,2}}$  is an integral of the Kahan discretization,
see the corresponding example in \cite{PPS}.

  Note that the invariant set $p_{4,2}(\bx)=0$, represented by a product of four lines, is shown in red in the phase plot of the Quartic Nahm system in Figure 3 of our paper \cite{VCMMOQ}, for timestep $h=1/5.$

\subsection{Example 4: Lagrange top}
This subsection describes the computation of the modified Darboux polynomials of the Kahan map of one of the classical integrable tops, requiring significant computing power. 
Discretizations of the Lagrange top have been studied in \cite{KH} and \cite{PPS}, where it was shown that the Kahan map preserves a number of modified integrals. The Lagrange top reads
\begin{equation}\label{LagrangeTop}
\frac{d}{dt}\left( \begin{array}{c}
m_1 \\ 
m_2 \\ 
m_3 \\ 
p_1 \\ 
p_2 \\ 
p_3 \\   
\end{array} \right) 
=
\left( \begin{array}{c} 
\left( \alpha-1 \right) m_{{2}}m_{{3}}+\gamma\,p_{{2}}\\
\left( 1-\alpha \right) m_{{1}}m_{{3}}-\gamma\,p_{{1}}\\
0\\
\alpha\,p_{{2}}m_{{3}}-p_{{3}}m_{{2}}\\
p_{{3}}m_{{1}}- \alpha\,p_{{1}}m_{{3}}\\
p_{{1}}m_{{2}}-p_{{2}}m_{{1}}\\
\end{array} \right) 
,
\end{equation}
where $m_i$ and $p_i$ are the angular and linear momentum components and $\alpha$ and $\gamma$ are constant parameters. The Lagrange top admits four independent integrals 
\begin{align*}
	\tilde{H}_1 &=  p_1^2+p_2^2+p_3^2, \\
	\tilde{H}_2 &=  p_1m_1+p_2m_2+p_3m_3, \\
	\tilde{H}_3 &=  m_1^2+m_2^2+\alpha m_3^2 + 2 \gamma p_3,\\
	\tilde{H}_4 &=  m_3.
\end{align*}
For the Lagrange top, it suffices to treat $m_3$ as a free parameter by working in the variables $\mathbf{\bar{x}}=(m_1,m_2,p_1,p_2,p_3)^\mathrm{T}$ and look for degree-six Darboux polynomial densities in $\mathbf{\bar{x}}$. The Jacobian determinant of the Kahan map has the form $$J=\frac{K_1K_2}{D^3},$$ where $K_1$ has 377 terms and $K_2$ has 35 terms. Using the cofactor $C_1({\bf x}) = J$, we find the following five Darboux polynomial densities 
\begin{align*}
	p_{1,1} &=-256\,\gamma+64\,{h}^{2}\gamma\, \left( -2\,{m_{{3}}}^{2}{\alpha}^{2}+2\,{m_{{3}}}^{2}\alpha+\gamma\,p_{{3}}-{m_{{1}}}^{2}-{m_{{2}}}^{2}-{m_{{3}}}^{2} \right)\nonumber\\\qquad & +h^{4}Q_{1,2}^{(4)} +h^{6}Q_{1,3}^{(6)} +h^{8}Q_{1,4}^{(8)} ,
	\\p_{1,2} &=-2048\,{\gamma}^{3}+256\,{h}^{2}{\gamma}^{3} \left( -2\,{m_{{3}}}^{2}{\alpha}^{2}+2\,{m_{{3}}}^{2}\alpha+4\,\gamma\,p_{{3}}-{m_{{1}}}^{2}-{m_{{2}}}^{2}-2\,{m_{{3}}}^{2} \right) \nonumber\\\qquad &+h^{4}Q_{2,2}^{(4)} +h^{6}Q_{2,3}^{(6)} +h^{8}Q_{2,4}^{(8)} +h^{10}Q_{2,5}^{(8)} , \\
p_{1,3}     &=-2048\,m_{{3}} \left( 6\,\alpha-5 \right) +{h}^{2} (1536\,m_3\,(1-\alpha)(m_1^2+m_2^2) +6144\,\alpha\,\gamma\,m_{{3}}p_{{3}} \\
		&+1024\,m_3^3(1-4\,\alpha+10\,\alpha^2-8\,\alpha^3)  -512\,\gamma\,p_{{1}}m_{{1}}-512\,\gamma\,p_{{2}}m_{{2}} \\
		&-5120\,\gamma\,m_{{3}}p_{{3}} ) +h^{4}Q_{3,2}^{(5)} +h^{6}Q_{3,3}^{(7)} +h^{8}Q_{3,4}^{(9)} +h^{10}Q_{3,5}^{(11)} +h^{12}Q_{3,6}^{(11)} ,\\ 
p_{1,4}      &=-256\,\gamma\,m_{{3}} \left( 2\,\alpha-1 \right) +64\,{h}^{2}\gamma\, ( -2\,{\alpha}^{3}{m_{{3}}}^{3}+3\,{\alpha}^{2}{m_{{3}}}^{3}+4\,\alpha\,\gamma\,m_{{3}}p_{{3}}				\\
	        &-\alpha\,{m_{{1}}}^{2}m_{{3}}-\alpha\,{m_{{2}}}^{2}m_{{3}}
	-3\,\alpha\,{m_{{3}}}^{3}+\gamma\,p_{{1}}m_{{1}}+\gamma\,p_{{2}}m_{{2}}-2\,\gamma\,m_{{3}}p_{{3}}+{m_{{3}}}^{3} )\\
	& +h^{4}Q_{4,2}^{(5)} +h^{6}Q_{4,3}^{(7)} +h^{8}Q_{4,4}^{(9)} ,\\
p_{1,5}      &=-65536+  {h}^{2}\big( -32768\,{m_{{3}}}^{2}{\alpha}^{2}+40960\,{m_{{3}}}^{2}\alpha+16384\,(\gamma\,p_{{3}}-\,{m_{{1}}}^{2}-\,{m_{{2}}}^{2}\\\qquad &-2\,{m_{{3}}}^{2})) +h^{4}Q_{5,2}^{(4)} +h^{6}Q_{5,3}^{(6)} +h^{8}Q_{5,4}^{(8)} +h^{10}Q_{5,5}^{(10)} +h^{12}Q_{5,6}^{(12)} +h^{14}Q_{5,7}^{(12)} ,
\end{align*}
where each $Q^{(i)}_{j,k}$ is a polynomial of degree $i$ in the variables $(m_1,m_2,m_3,p_1,p_2,p_3)$. Taking the quotients $\frac{p_{1,1}}{p_{1,5}}$, $\frac{p_{1,2}}{p_{1,5}}$, $\frac{p_{1,3}}{p_{1,5}}$ and $\frac{p_{1,4}}{p_{1,5}}$ yields four functionally independent integrals. Taking functionally dependent combinations of these, we are able to form the following integrals that are preserved by the Kahan discretization
\begin{align*}
	H_1 &=  \frac{p_1^2+p_2^2+p_3^2+\mathcal{O}(h^2)}{1+\mathcal{O}(h^2)}, \\
	H_2 &=  \frac{p_1m_1+p_2m_2+p_3m_3+\mathcal{O}(h^2)}{1+\mathcal{O}(h^2)},\\
	H_3 &=  \frac{m_1^2+m_2^2+\alpha m_3^2 + 2 \gamma p_3+\mathcal{O}(h^2)}{1+\mathcal{O}(h^2)},\\
	H_4 &=  m_3, 
\end{align*}
where the first three integrals are modified versions of the continuous integrals.

\subsection{Example 5: A Kahan map having a non-rational integral}
This example was constructed to display a non-rational integral, an exact linearization and solution, as well as a large number of affine Darboux polynomials (even more when the parameter $\alpha$ equals 1).
\begin{equation}  \label{ODE}
\begin{aligned}
\frac{ d x_1}{ d t} &= 24( x_{2}- x_{4})(1-\alpha ) + 9 x_{1}^{2}+48 x_{1} x_{3}-40 x_{1} x_{4}+24 x_{2}^{2}-48 x_{2} x_{3}\\
       & +48 x_{2} x_{4}+48 x_{3}^{2}+24 x_{3} x_{4}-132 x_{4}^{2}+x_{1}, \\
\frac{ d x_2}{ d t} &=7 (x_{2} - x_{4}) \alpha -2 x_{1}^{2}-12 x_{1} x_{3}+12 x_{1} x_{4}-5 x_{2}^{2}+12 x_{2} x_{3}-14 x_{2} x_{4}\\
      &-12 x_{3}^{2}-6 x_{3} x_{4}+38 x_{4}^{2}-6 x_{2}+7 x_{4},	\\
\frac{ d x_3}{ d t} &= 14 (x_{2}  - x_{4})( \alpha - 1) -4 x_{1}^{2}-24 x_{1} x_{3}+24 x_{1} x_{4}-14 x_{2}^{2}+28 x_{2} x_{3}\\
	&-28 x_{2} x_{4}-25 x_{3}^{2}-12 x_{3} x_{4}+76 x_{4}^{2}+x_{3},	\\
\frac{ d x_4}{ d t} &= 	6( x_{2} - x_{4}) \alpha -2 x_{1}^{2}-12 x_{1} x_{3}+12 x_{1} x_{4}-6 x_{2}^{2}+12 x_{2} x_{3}-12 x_{2} x_{4}\\
	&-12 x_{3}^{2}-6 x_{3} x_{4}+37 x_{4}^{2}-6 x_{2}+7 x_{4},
\end{aligned}
\end{equation}

%
 where $\alpha$ is a parameter.
	
  The Jacobian determinant of the Kahan discretization of this ODE is
\begin{equation}
J={\frac{{K_{{1}}}^{3}{K_{{2}}}^{3}K_{3}K_{4}}{{D_{{1}}}^{2}{D_{{2}}}^{2}{D_{{3}}}^{2}{D_{{4}}}^{2}}},
\end{equation}
where $K_1,\dots,K_4$ are constant, and $D_1,\dots,D_4$ are affine: 
\begin{align*}
K_1 &=  1-\tfrac12 h,
\\
K_2 &=  1+\tfrac12 h,
\\
K_3 &=1 - \tfrac12 \alpha h,
\\
K_4 &= 1+ \tfrac12 \alpha  h,
\\
D_1 &=  1 -\tfrac{1}{2} h -h x_{1}-4 h x_{4},
\\
D_2 &=  1 -\tfrac{1}{2} h  -2 h x_{2}+h x_{3},
\\
D_3 &=  1 -\tfrac{1}{2} h  -2 h x_{1}-3 h x_{3}-h x_{4},
\\
D_4 &= 1 -\tfrac{1}{2} \alpha  h -h x_{2}+h x_{4}.
\end{align*}
The following 11 affine discrete Darboux polynomials $p_{i,1}$ for $i=1,...,11$ are found corresponding to the cofactors $C_i$
\begin{equation}\label{table}
\begin{array}{c|c|c}
i&p_{i,1} & C_i \\
\hline
1 & 1+x_{1}+4 x_{4}
& K_1/D_1
\\
2 & x_{{1}}+4\,x_{{4}}
& K_2/D_1
\\
3 & 1+2 x_{2}-x_{3}
& K_1/D_2
\\
4 & 2 x_{2}-x_{3}
& K_2/D_2
\\
5 & 1+2 x_{1}+3 x_{3}+x_{4}
& K_1/D_3
\\
6 & 2 x_{1}+3 x_{3}+x_{4}
& K_2/D_3
\\
7 & \alpha +x_{2}-x_{4}
& K_3/D_4
\\
8 & x_{2}-x_{4}
& K_4/D_4
\\
9 & x_{{1}}-2\,x_{{2}}+x_{{3}}+4\,x_{{4}}
& K_1K_2/(D_1 D_2)
\\
10 & x_{1} +3 x_{3} -3 x_{4}
& K_1K_2/(D_1 D_3)
\\
11 & 2 x_{1} -2 x_{2} +4 x_{3}+x_{4}
& K_1K_2/(D_2 D_3)
\end{array}
\end{equation}
 It can be read off from table(\ref{table}) that the three ratios $\frac{p_{1,1}}{p_{2,1}}, \frac{p_{3,1}}{p_{4,1}}$ and $ \frac{p_{5,1}}{p_{6,1}}$ each have constant cofactor $K_1/K_2$, and that $ \frac{p_{7,1}}{p_{8,1}}$ has constant cofactor $K_3/K_4$.

  It follows that the three functions
\begin{align} 
H_1 &= \frac{p_{1,1}p_{4,1}}{p_{2,1}p_{3,1}} = \frac{\left(1+x_{1}+4 x_{4}\right) \left(2 x_{2}-x_{3}\right)}{\left(x_{1}+4 x_{4}\right) \left(1+2 x_{2}-x_{3}\right)} \label{eq:H1},  \\
H_2 &= \frac{p_{1,1}p_{6,1}}{p_{2,1}p_{5,1}} = \frac{\left(1+x_{1}+4 x_{4}\right) \left(2 x_{1}+3 x_{3}+x_{4}\right)}{\left(x_{1}+4 x_{4}\right) \left(1+2 x_{1}+3 x_{3}+x_{4}\right)}   \label{eq:H2},  \\
H_3 &= \frac{p_{1,1}p_{8,1}^a}{p_{2,1}p_{7,1}^a} = \frac{\left(1+x_{1}+4 x_{4}\right) \left(x_{2}-x_{4}\right)^{a}}{\left(x_{1}+4 x_{4}\right) \left(\alpha +x_{2}-x_{4}\right)^{a}}    \label{eq:H3},
\end{align}
are all integrals of the Kahan map of (\ref{ODE}), where $a:= \frac{\ln(K_1/K_2)}{\ln(K_3/K_4)}$.

  We remark that in general the integral \eqref{eq:H3} is non-rational, even though  the leaves  $p_{1,1}/p_{2,1} = K_1/K_2$ and 
$p_{7,1}/p_{8,1} = K_3/K_4$ making up the integral are polynomial. \footnote{Note that in the special case $\alpha=1$, $K_3=K_1$ and $K_4=K_2$; the Kahan map of (\ref{ODE}) has 3 additional affine Darboux polynomials; and the integral  \eqref{eq:H3} becomes rational.}

  We also see that
\begin{equation}
C_7C_8C_9C_{10}C_{11} = \frac{K_1^3K_2^3K_3K_4}{D_1^2D_2^2D_3^2D_4^2} = J.
\end{equation}
Hence the Kahan map of the vector field (\ref{ODE}) has the preserved measure
\begin{equation} \label{dens}
\frac{d \bx}{p_{7,1}p_{8,1}p_{9,1}p_{10,1}p_{11,1}} ,
\end{equation}
and therefore the Kahan map is super-integrable. It follows that the ODE (\ref{ODE}) is also super-integrable, preserving the same integrals and measure as the map.

Finally, it can be read off from Table(\ref{table})  and the fact that $K_1,\dots,K_4$ are constant that
\begin{equation}  \label{rats}
\begin{aligned}
p_{1,1}(\bx_n) &= \beta_1 p_{2,1}(\bx_n)(K_1/K_2)^n,  \\
p_{3,1}(\bx_n) &= \beta_2 p_{4,1}(\bx_n)(K_1/K_2)^n, \\
p_{5,1}(\bx_n) &= \beta_3 p_{6,1}(\bx_n)(K_1/K_2)^n,  \\
p_{7,1}(\bx_n) &= \beta_4 p_{8,1}(\bx_n)(K_3/K_4)^n, 
\end{aligned}
\end{equation}
where the $\beta_i$ are integration constants.

  The equations (\ref{rats}) represent 4 linear equations in the four variables $x_1,x_2,x_3,x_4$. Solving them, we obtain the exact solution of  the Kahan map of eq(\ref{ODE}):

\begin{equation}
\begin{aligned}
\label{exsol}
x_1(n) &= \frac{1}{D} ( (4 ((\beta_{1}-\frac{7 \beta_{3}}{4}) \beta_{2}+3 \beta_{1} \beta_{3}) \beta_{4} \mathit{r_2^n}+24(( \beta_{1}+ \beta_{3}) \beta_{2}+ \beta_{1} \beta_{3}) \alpha    \\
& +(-4 \beta_{1}+7 \beta_{3}) \beta_{2}-12 \beta_{1} \beta_{3}) \mathit{r_1^{2 n}}+ (9-(16\beta_{1}- 3 \beta_{2} + 5 \beta_{3}) \mathit{r_1^n}) \beta_{4} \mathit{r_2^n} 
 \\
&  +(24(- \beta_{1}- \beta_{2}- \beta_{3}) \alpha +16 \beta_{1}-3 \beta_{2}+5 \beta_{3}) \mathit{r_1^{2n}} \\
	&-24 \mathit{r_1^{3 n}} \alpha  \beta_{1} \beta_{2} \beta_{3}+24 \alpha -9  ),  \\
x_2(n) &= \frac{1}{D} ( (- ( (\beta_{1}-2 \beta_{3}) \beta_{2}+3 \beta_{1} \beta_{3} ) \beta_{4} \mathit{r_2^n}+ ((-7 \beta_{1}-7 \beta_{3}) \beta_{2}-7 \beta_{1} \beta_{3} ) \alpha   \\
&   +(\beta_{1}-2 \beta_{3}) \beta_{2}+3 \beta_{1} \beta_{3}) \mathit{r_1^{2 n}}+4 (-\frac{1}{2}+(\beta_{1}-\frac{\beta_{2}}{4}+\frac{\beta_{3}}{4}) \mathit{r_1^n}) \beta_{4} \mathit{r_2^n} \\
&  +((7 \beta_{1}+7 \beta_{2}+7 \beta_{3}) \alpha -4 \beta_{1}+\beta_{2}-\beta_{3}) \mathit{r_1^n}+7 \mathit{r_1^{3 n}} \alpha  \beta_{1} \beta_{2} \beta_{3}-7 \alpha +2 ),  \\
x_3(n) &= \frac{1}{D} (  (-2 ((\beta_{1}-2 \beta_{3}) \beta_{2}+\frac{7 \beta_{1} \beta_{3}}{2}) \beta_{4} \mathit{r_2^n}-14(( \beta_{1}+ \beta_{3}) \beta_{2}+ \beta_{1} \beta_{3}) \alpha
 \\
&   +(2 \beta_{1}-4 \beta_{3}) \beta_{2}+7 \beta_{1} \beta_{3}) \mathit{r_1^{2 n}}+9 (-\frac{5}{9}+(\beta_{1}-\frac{2 \beta_{2}}{9}+\frac{\beta_{3}}{3}) \mathit{r_1^n}) \beta_{4} \mathit{r_2^n} +5 \\
&  +(14( \beta_{1}+ \beta_{2}+ \beta_{3}) \alpha -9 \beta_{1}+2 \beta_{2}-3 \beta_{3}) \mathit{r_1^n}
	+14\alpha (\mathit{r_1^{3 n}}   \beta_{1} \beta_{2} \beta_{3}-1)  ), \\
x_4(n) &= \frac{1}{D} (  (-((\beta_{1}-2 \beta_{3}) \beta_{2}+3 \beta_{1} \beta_{3}) \beta_{4} \mathit{r_2^n}+((-6 \beta_{1}-6 \beta_{3}) \beta_{2}-6 \beta_{1} \beta_{3}) \alpha   \\
&  +(\beta_{1}-2 \beta_{3}) \beta_{2}+3 \beta_{1} \beta_{3}) \mathit{r_1^{2 n}}+4 (-\frac{1}{2}+(\beta_{1}-\frac{\beta_{2}}{4}+\frac{\beta_{3}}{4}) \mathit{r_1^n}) \beta_{4} \mathit{r_2^n}    \\
& +((6 \beta_{1}+6 \beta_{2}+6 \beta_{3}) \alpha -4 \beta_{1}+\beta_{2}-\beta_{3}) \mathit{r_1^n}+6 \mathit{r_1^{3 n}} \alpha  \beta_{1} \beta_{2} \beta_{3}-6 \alpha +2 ),
\end{aligned}
\end{equation} 
where $$r_1=K_1/K_2, r_2=K_3/K_4, \mbox{ and } D=\left(\beta_{4} \mathit{r_2^n}-1\right) \left(\beta_{3} \mathit{r_1^n}-1\right) \left(\beta_{1} \mathit{r_1^n}-1\right) \left(\beta_{2} \mathit{r_1^n}-1\right).$$


  The exact solution of the ODE (\ref{ODE}) is obtained from (\ref{exsol}) by replacing $\mathit{r_1^n}$ by $e^{t}$, resp $\mathit{r_2^n}$ by $e^{\alpha t}$.

\subsection{Example 6: A 4D polarization map}
This subsection exemplifies the fact that with regard to the discretization of ODEs, the application of our method is not restricted to Kahan's discretization. Here we study an application to our polarization method \cite{CMMOQpol}. For a different application, cf \cite{HQ}.

  We consider the 4-dimensional map presented in \cite{CMMOQpol} (choosing $a=2,b=1,c=-3,$ $d=-1,e=1$ in their notation).
\begin{align*}
x'_1 &= x_2,    \\
x'_2 &= \frac{2\,h{x_{{1}}}^{2}x_{{2}}-6\,h{x_{{1}}}^{2}x_{{4}}-12\,hx_{{1}}x_{{2}}x_{{3}}-4\,hx_{{1}}x_{{3}}x_{{4}}-2\,hx_{{2}}{x_{{3}}}^{2}+2\,h{x_{{3}}}^{2}x_{{4}}+x_{{1}}}{D({\bf x})},  \\
x'_3 &= x_4,    \\
x'_4 &= \frac{-4\,h{x_{{1}}}^{2}x_{{2}}-2\,h{x_{{1}}}^{2}x_{{4}}-4\,hx_{{1}}x_{{2}}x_{{3}}+12\,hx_{{1}}x_{{3}}x_{{4}}+6\,hx_{{2}}{x_{{3}}}^{2}+2\,h{x_{{3}}}^{2}x_{{4}}+x_{{3}}}{D({\bf x})},
\end{align*}
where the quartic $D$ is
\begin{align*}
D &:= -28\,{h}^{2}{x_{{1}}}^{2}{x_{{2}}}^{2}+4\,{h}^{2}{x_{{1}}}^{2}x_{{2}}x
_{{4}}-40\,{h}^{2}{x_{{1}}}^{2}{x_{{4}}}^{2}+4\,{h}^{2}x_{{1}}{x_{{2}}
}^{2}x_{{3}}-28\,{h}^{2}x_{{1}}x_{{2}}x_{{3}}x_{{4}} \\  & -8\,{h}^{2}x_{{1}}
x_{{3}}{x_{{4}}}^{2}-40\,{h}^{2}{x_{{2}}}^{2}{x_{{3}}}^{2}-8\,{h}^{2}x
_{{2}}{x_{{3}}}^{2}x_{{4}}-16\,{h}^{2}{x_{{3}}}^{2}{x_{{4}}}^{2}+1.
\end{align*}
The determinant of the Jacobian of the map has the factorized form
\begin{equation*}
J = \frac{K_1}{D^3},
\end {equation*}
where
\begin{align*}
K_1 &:=  -128\,{h}^{3}{x_{{1}}}^{3}{x_{{2}}}^{3}+456\,{h}^{3}{x_{{1}}}^{3}{x_{{
2}}}^{2}x_{{4}}-120\,{h}^{3}{x_{{1}}}^{3}x_{{2}}{x_{{4}}}^{2}+496\,{h}
^{3}{x_{{1}}}^{3}{x_{{4}}}^{3} \\ & +984\,{h}^{3}{x_{{1}}}^{2}{x_{{2}}}^{3}x
_{{3}}+432\,{h}^{3}{x_{{1}}}^{2}{x_{{2}}}^{2}x_{{3}}x_{{4}}+936\,{h}^{
3}{x_{{1}}}^{2}x_{{2}}x_{{3}}{x_{{4}}}^{2}+336\,{h}^{3}{x_{{1}}}^{2}x_
{{3}}{x_{{4}}}^{3} \\  & +408\,{h}^{3}x_{{1}}{x_{{2}}}^{3}{x_{{3}}}^{2}-864\,
{h}^{3}x_{{1}}{x_{{2}}}^{2}{x_{{3}}}^{2}x_{{4}}+216\,{h}^{3}x_{{1}}x_{
{2}}{x_{{3}}}^{2}{x_{{4}}}^{2}-528\,{h}^{3}x_{{1}}{x_{{3}}}^{2}{x_{{4}
}}^{3} \\ & -488\,{h}^{3}{x_{{2}}}^{3}{x_{{3}}}^{3}-192\,{h}^{3}{x_{{2}}}^{2
}{x_{{3}}}^{3}x_{{4}}-264\,{h}^{3}x_{{2}}{x_{{3}}}^{3}{x_{{4}}}^{2}-80
\,{h}^{3}{x_{{3}}}^{3}{x_{{4}}}^{3} \\ & -84\,{h}^{2}{x_{{1}}}^{2}{x_{{2}}}^
{2}+12\,{h}^{2}{x_{{1}}}^{2}x_{{2}}x_{{4}}-120\,{h}^{2}{x_{{1}}}^{2}{x
_{{4}}}^{2}+12\,{h}^{2}x_{{1}}{x_{{2}}}^{2}x_{{3}}-84\,{h}^{2}x_{{1}}x
_{{2}}x_{{3}}x_{{4}} \\  & -24\,{h}^{2}x_{{1}}x_{{3}}{x_{{4}}}^{2}-120\,{h}^{
2}{x_{{2}}}^{2}{x_{{3}}}^{2}-24\,{h}^{2}x_{{2}}{x_{{3}}}^{2}x_{{4}}-48
\,{h}^{2}{x_{{3}}}^{2}{x_{{4}}}^{2}+1.
\end{align*}
Using $C_1=J$ as cofactor, the resulting functionally independent Darboux polynomials are
\begin{align*}
p_{1,1} &= D, \\
p_{1,2} & = \left( x_{{1}}x_{{4}}-x_{{2}}x_{{3}} \right)  ( 4\,h{x_{{1}}}^{2
}{x_{{2}}}^{2}+4\,h{x_{{1}}}^{2}x_{{2}}x_{{4}}-6\,h{x_{{1}}}^{2}{x_{{4
}}}^{2}+4\,hx_{{1}}{x_{{2}}}^{2}x_{{3}} -x_{{2}}x_{{3}} \\
	& -24\,hx_{{1}}x_{{2}}x_{{3}}x_{{
4}}   -4\,hx_{{1}}x_{{3}}{x_{{4}}}^{2}-6\,h{x_{{2}}}^{2}{x_{{3}}}^{2}-4\,
hx_{{2}}{x_{{3}}}^{2}x_{{4}}+2\,h{x_{{3}}}^{2}{x_{{4}}}^{2}+x_{1}x_{
4}),   \\
p_{1,3} &= 26\,h{x_{{1}}}^{3}{x_{{2}}}^{2}x_{{4}}+10\,h{x_{{1}}}^{3}x_{{2}}{x_{{4
}}}^{2}+2\,h{x_{{1}}}^{3}{x_{{4}}}^{3}-26\,h{x_{{1}}}^{2}{x_{{2}}}^{3}
x_{{3}}-60\,h{x_{{1}}}^{2}x_{{2}}x_{{3}}{x_{{4}}}^{2} \\  & -8\,h{x_{{1}}}^{2
}x_{{3}}{x_{{4}}}^{3}-10\,hx_{{1}}{x_{{2}}}^{3}{x_{{3}}}^{2}+60\,hx_{{
1}}{x_{{2}}}^{2}{x_{{3}}}^{2}x_{{4}}+14\,hx_{{1}}{x_{{3}}}^{2}{x_{{4}}
}^{3}-2\,h{x_{{2}}}^{3}{x_{{3}}}^{3} \\  & +8\,h{x_{{2}}}^{2}{x_{{3}}}^{3}x_{
{4}}-14\,hx_{{2}}{x_{{3}}}^{3}{x_{{4}}}^{2}+2\,{x_{{1}}}^{2}{x_{{2}}}^
{2}+2\,{x_{{1}}}^{2}x_{{2}}x_{{4}}+2\,x_{{1}}{x_{{2}}}^{2}x_{{3}} \\
	&-18\,x_{{1}}x_{{2}}x_{{3}}x_{{4}}  -2\,x_{{1}}x_{{3}}{x_{{4}}}^{2}-2\,x_{{2}}
{x_{{3}}}^{2}x_{{4}}+{x_{{3}}}^{2}{x_{{4}}}^{2}.
\end{align*}
The map thus possesses the preserved measure $\int \frac{d \bx}{p_{1,1}}$ and the (independent) first integrals  $\frac{p_{1,2}}{p_{1,1}}$ and $\frac{p_{1,3}}{p_{1,1}}$, in agreement with \cite{CMMOQpol}.

\subsection{Example 7: sine-Gordon maps}
This subsection exemplifies the application of our method to maps arising from the theory of discrete integrable systems. We consider $(k+1)$-dimensional maps that arise as so-called $(1,k)$ reductions of the discrete sine-Gordon equation \cite{quispel91imd}.

  We start with the case $k=3$, then treat the case $k=2$, before giving a general theorem for arbitrary $k$. 
\subsubsection{The $(1,3)$ sine-Gordon map.}

The $(1,3)$ sine-Gordon map $\phi$ is given by
\begin{align*}
x_i' &= x_{i+1}, \qquad i=0,1,2, \\
x_3' &= \frac{1-\alpha x_1 x_3}{x_0(x_1 x_3 - \alpha)},
\end{align*}
where $\alpha$ is a parameter.
  Using $C_1({\bf x}) = J$, we find the corresponding Darboux polynomials:
\begin{align*}
p_{1,1} &= x_{{3}}x_{{2}}x_{{1}}x_{{0}}, \\
p_{1,2} &= {x_{{0}}}^{2}x_{{1}}x_{{2}}{x_{{3}}}^{2}-\alpha\,{x_{{0}}}^{2}x_{{2}}x
_{{3}}-\alpha\,x_{{0}}{x_{{1}}}^{2}x_{{3}}-\alpha\,x_{{0}}x_{{1}}{x_{{
2}}}^{2}  \\
&  -\alpha\,x_{{0}}x_{{1}}{x_{{3}}}^{2}-\alpha\,x_{{0}}{x_{{2}}}^
{2}x_{{3}}-\alpha\,{x_{{1}}}^{2}x_{{2}}x_{{3}}+x_{{1}}x_{{2}} , \\
p_{1,3} &= {x_{{0}}}^{2}{x_{{1}}}^{2}x_{{2}}x_{{3}}+x_{{0}}{x_{{1}}}^{2}{x_{{2}}}
^{2}x_{{3}}+x_{{0}}x_{{1}}{x_{{2}}}^{2}{x_{{3}}}^{2}-\alpha\,{x_{{0}}}
^{2}x_{{1}}x_{{2}}  \\  & -\alpha\,x_{{1}}x_{{2}}{x_{{3}}}^{2}+x_{{0}}x_{{1}}+
x_{{0}}x_{{3}}+x_{{3}}x_{{2}}.
\end{align*}
It follows that $\phi$ possesses the (independent) first integrals $\frac{p_{1,2}}{p_{1,1}}$ and $\frac{p_{1,3}}{p_{1,1}}$, and the preserved measure $\int \frac{d\bx}{p_{1,1}}$. These results were found using different methods in  \cite{quispel91imd}.

\subsubsection{The $(1,2)$ sine-Gordon map.}

The $(1,2)$ sine-Gordon map $\phi$ is given by
\begin{align*}
x_i' &= x_{i+1}, \qquad i=0,1, \\
x_2' &= \frac{1-\alpha x_1 x_2}{x_0(x_1 x_2 - \alpha)}.
\end{align*}
Using $C_1({\bf x}) = - J$, we find the corresponding Darboux polynomials:
\begin{align*}
p_{1,1} &= x_{{0}}x_{{1}}x_{{2}}, \\
p_{1,2} &=  {x_{{0}}}^{2}x_{{1}}{x_{{2}}}^{2}-\alpha\,{x_{{0}}}^{2}x_{{2}}-\alpha
\,x_{{0}}{x_{{1}}}^{2}-\alpha\,x_{{0}}{x_{{2}}}^{2}-\alpha\,{x_{{1}}}^
{2}x_{{2}}+x_{{1}},  \\
p_{1,3} &= {x_{{0}}}^{2}{x_{{1}}}^{2}x_{{2}}+x_{{0}}{x_{{1}}}^{2}{x_{{2}}}^{2}-
\alpha\,{x_{{0}}}^{2}x_{{1}}-\alpha\,x_{{1}}{x_{{2}}}^{2}+x_{{0}}+x_{{
2}}.
\end{align*}
It follows that $\phi$ possesses the (independent) first integrals  $\frac{p_{1,2}}{p_{1,1}}$ and $\frac{p_{1,3}}{p_{1,1}}$, and the preserved measure $\int \frac{d \bx}{p_{1,1}}$. Note that this is one extra first integral, that was not found using the Lax representation approach of ref \cite{quispel91imd}. Moreover, we find that there is an additional cofactor $C_2({\bf x}) = J$, for which we find the corresponding Darboux polynomial 
$$p_{2,1} =  -{x_{{0}}}^{2}x_{{1}}{x_{{2}}}^{2}+\alpha\,{x_{{0}}}^{2}x_{{2}}-\alpha
\,x_{{0}}{x_{{1}}}^{2}+\alpha\,x_{{0}}{x_{{2}}}^{2}-\alpha\,{x_{{1}}}^
{2}x_{{2}}+x_{{1}} .$$
Normally, a sole Darboux polynomial does not yield an integral, but, because $C_2 = -C_1$, we find that
$$ 
	H := \frac{p_{2,1}}{p_{1,1}}, 
$$
is a so-called 2-integral \cite{HBQC}, i.e. an integral of $\phi \circ \phi$. In this case $H({\bf x}') = -H({\bf x})$. This result was not found using the Lax matrix approach in \cite{quispel91imd}. 

\subsubsection{The $(1,k)$ sine-Gordon map.} 
  The $(1,k)$ sine-Gordon map $\phi_k$ is given by
\begin{equation}\label{sG1k}
\begin{array}{lcl}
x_i' &=& x_{i+1}, \qquad i=0, \dots,k-1, \\
    x_k' &=& \displaystyle{ \frac{1-\alpha x_1 x_k}{x_0(x_1 x_k - \alpha)} },
\end{array}
\end{equation}
where $\lfloor \frac{k+1}{2} \rfloor$ functionally independent rational integrals for this map were found using a Lax matrix approach in \cite{quispel91imd}.
Denote these integrals by $H_k^n({\bf x}) = \frac{N_k^n({\bf x})}{D_k^n({\bf x})}$, $n=1,\dots, \lfloor \frac{k+1}{2} \rfloor$, and define
\begin{equation}\label{define_eps}
\epsilon:= (-1)^{k+1}.
\end{equation}

\begin{theorem}
For all $n$ and $k$, the Darboux polynomials $N_k^n$ and $D_k^n$ are given by 
\begin{align*}
N_k^n({\bf x'}) &= C({\bf x}) N_k^n({\bf x}), \\
D_k^n({\bf x'}) &= C({\bf x}) D_k^n({\bf x}),
\end{align*}
where the cofactor $C({\bf x})$ depends only on $k$, and is given by $C({\bf x}) = \epsilon |D\phi_k({\bf x})|$.
\end{theorem}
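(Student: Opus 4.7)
The plan is to separate the statement into two parts: compute $\epsilon\,|D\phi_k|$ explicitly, then show every $N_k^n$ and $D_k^n$ transforms by this same factor. I would begin by evaluating the Jacobian determinant. Since $\partial x_i'/\partial x_j = \delta_{j,i+1}$ for $i<k$, rows $0,\dots,k-1$ of $D\phi_k$ form a shift matrix and only the last row is non-trivial. Expanding along the first column, the only surviving term comes from the entry $\partial x_k'/\partial x_0 = -x_k'/x_0$, multiplied by the sign $(-1)^{k}$ and a $k\times k$ identity minor, giving
\begin{equation*}
|D\phi_k|({\bf x}) = (-1)^{k+1}\frac{x_k'}{x_0}, \qquad \epsilon\,|D\phi_k|({\bf x}) = \frac{x_k'}{x_0}.
\end{equation*}

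Next I would verify the claim for the distinguished monomial $q({\bf x}) := x_0 x_1 \cdots x_k$. A direct substitution yields $q({\bf x}') = x_1 x_2 \cdots x_k \cdot x_k' = (x_k'/x_0)\,q({\bf x})$, so $q$ is itself a Darboux polynomial whose cofactor is exactly the candidate $C({\bf x}) = \epsilon\,|D\phi_k|({\bf x})$. This is consistent with the $k=2$ and $k=3$ subsections above, where $q$ is precisely the common denominator $p_{1,1}$ of every listed integral. For arbitrary $n$, since $I_k^n = N_k^n/D_k^n$ is a rational first integral in lowest terms, clearing the only denominator introduced by $x_k' = (1-\alpha x_1 x_k)/(x_0(x_1 x_k - \alpha))$ and using $\gcd(N_k^n, D_k^n)=1$ produces a common rational cofactor $C_n({\bf x})$ satisfying $N_k^n({\bf x}')=C_n\,N_k^n({\bf x})$ and $D_k^n({\bf x}')=C_n\,D_k^n({\bf x})$. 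The theorem thus reduces to identifying $C_n$ with $C = x_k'/x_0$ for every $n$.

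The main obstacle is this last identification. My plan is to exploit the Lax-matrix construction of the integrals in \cite{quispel91imd}: I would try to show that each denominator $D_k^n$ factors as $q\cdot\tilde D_k^n$ where $\tilde D_k^n$ is itself $\phi_k$-invariant, so that $D_k^n$ inherits the cofactor $x_k'/x_0$ from $q$; the numerator $N_k^n$ then automatically shares the same cofactor because $I_k^n$ is invariant. For small $k$ this decomposition can be verified by direct symbolic computation, as the explicit cases above demonstrate. For general $k$ I would attempt an induction on $k$, using the shift structure $x_i'=x_{i+1}$ together with the explicit recurrence that defines $x_k'$ to propagate the cofactor identity from $\phi_{k-1}$ to $\phi_k$, with the $k=2,3$ computations providing the base cases. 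An alternative route would be to verify the cofactor law directly on the explicit trace-polynomial expressions for the $I_k^n$ in \cite{quispel91imd}, tracking the scaling of each monodromy trace under ${\bf x}\mapsto \phi_k({\bf x})$.
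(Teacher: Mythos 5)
Your opening computations are sound and in fact supply details the paper only cites: expanding $|D\phi_k|$ down the first column does give $(-1)^{k}\,\partial x_k'/\partial x_0=(-1)^{k+1}x_k'/x_0$, so $\epsilon\,|D\phi_k|=x_k'/x_0$, and the product $q=\prod_{l=0}^{k}x_l$ visibly satisfies $q({\bf x}')=(x_k'/x_0)\,q({\bf x})$; this is exactly the (anti-)measure-preservation statement $P({\bf x}')=\epsilon\,|D\phi_k({\bf x})|\,P({\bf x})$ that the paper imports from the sine-Gordon reference. You also correctly isolate the crux: each invariant $I_k^n=N_k^n/D_k^n$ forces a \emph{common} rational cofactor $C_n$ on numerator and denominator, and everything reduces to showing $C_n=x_k'/x_0$ for every $n$.

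But that reduction is where your proof stops being a proof. The factorization claim $D_k^n=q\cdot\tilde D_k^n$ with $\tilde D_k^n$ invariant is asserted, not established, and the proposed induction on $k$ is not credible as stated: the integrals of $\phi_k$ are coefficients of $\lambda$-powers in a trace of a product of $k+1$ Lax factors, their number $\lfloor\frac{k+1}{2}\rfloor$ changes with $k$, and there is no evident map from the data of $\phi_{k-1}$ to that of $\phi_k$ that would propagate a cofactor identity. Your ``alternative route'' is the one the paper actually takes, and it hinges on a single algebraic observation you do not supply: in the trace formula $Tr\,L^{1,k}$ one can pull a factor $1/x_l$ out of the $l$-th local Lax matrix (and $1/x_k$ out of the boundary matrix), after which every remaining matrix entry is polynomial in ${\bf x}$ and $\lambda$. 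Hence \emph{every} integral is manifestly of the form (polynomial)$/P$ with the \emph{same} denominator $P=\prod_{l=0}^{k}x_l$ for all $n$ --- there is no $n$-dependent denominator to analyze, no factorization lemma needed, and no appeal to lowest terms (which the theorem does not assume and which could in principle alter $D_k^n$). Since $P$ is a Darboux polynomial with cofactor $\epsilon\,|D\phi_k|$ and the quotient is invariant for every $\lambda$, each coefficient $N_k^n$ inherits the same cofactor. Without this rewriting of the Lax trace (or some substitute for it), your argument identifies the candidate cofactor but never proves that the actual integrals realize it.
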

The proof is given in appendix A.

\section{Detecting Darboux polynomials and integrals} \label{detect}

 Given a rational map $\mathbf{x}'=\phi({\bf x}):\mathbb{R}^n\rightarrow\mathbb{R}^n$ containing $k$ free parameters denoted by $\boldsymbol{\alpha}=(\alpha_1,\alpha_2,\dots,\alpha_k)$, one could ask if there exist particular choices of $\boldsymbol{\alpha}$ such that $\phi$ preserves additional second integrals.  We note that, in contrast to the {\it linear} cofactor equation (\ref{cofeqn}), this amounts to solving the {\it nonlinear} cofactor equation
 \begin{equation}\label{cofeq}
 	p(\mathbf{x}') = C(\mathbf{x};\boldsymbol{\alpha}) p(\mathbf{x})
 \end{equation}
for the Darboux polynomial indeterminants as well as the parameters, where $C(\mathbf{x};\boldsymbol{\alpha})$ can be non-linear in $\boldsymbol{\alpha}$. 

 \subsection{Example 8: Extended McMillan map}
 Consider the following rational map $\phi({\bf x})$ defined by
\begin{equation*}
	\phi\left(\begin{array}{c}
		x_1\\x_2
	\end{array}\right) = \left(\begin{array}{c}
	-x_2 -f(x_1)\\x_1
\end{array}\right),
\end{equation*}
where 
\begin{equation*}
	f(x_1) = {\frac {\alpha_1\,{x_{{1}}}^{3}+\alpha_2\,{x_{{1}}}^{2}+\alpha_3\,x_{{
					1}}+\alpha_4}{\alpha_5\,{x_{{1}}}^{2}+\alpha_2\,x_{{1}}+\alpha_6}},
\end{equation*}
and $\boldsymbol{\alpha}=(\alpha_1,\dots,\alpha_6)$ are free parameters. The integrability of a special case of this map was studied in \cite{RV}. The Jacobian of the map $\phi$ is $J=1$. If all parameters $\boldsymbol{\alpha}$ are arbitrary and using $C=J$, the equation
\begin{equation}
	p(\phi({\bf x})) = p({\bf x}), \label{phieq}
\end{equation} 
has only one solution $p_1({\bf x})=1$. Solving the non-linear cofactor equation (\ref{cofeq}) yields the condition $\alpha_1=0$. This is an integrable map known as the McMillan map \cite{McM}. Enforcing this condition, one now finds two solutions to equation \eqref{phieq}
\begin{align*}
	p_{{1}}({\bf x})
	&= 1,
	\\
	p_{{2}}({\bf x})
	&= \alpha_5\,{x_{{1}}}^{2}{x_{{2}}}^{2}+\alpha_2\left(\,{x_{{1}}}^{2}x_{{2}}+\,
	x_{{1}}{x_{{2}}}^{2}\right)+\alpha_3\,x_{{1}}x_{{2}}+\alpha_6\left(\,{x_{{1}}}^{2}+
	\,{x_{{2}}}^{2}\right)\nonumber\\&\quad+\alpha_4\left(\,x_{{1}}+\,x_{{2}}\right),
\end{align*}
where $p_2({\bf x})$ is a preserved integral of $\phi$, in agreement with McMillan. 

\subsection{Example 9: Two coupled Euler tops}
This subsection exemplifies an ODE that possesses a non-rational integral. This integral only seems to be preserved by the Kahan map in two special cases: one rational and one polynomial. A Kahan map that possesses a non-rational integral, that becomes rational in the continuum limit, is given in example 5.

  We now consider two coupled Euler tops whose vector field is given by

\begin{equation}\label{cetvf}
\frac{d}{dt}\left( \begin{array}{c}
x_1 \\ 
x_2 \\ 
x_3 \\ 
x_4 \\ 
x_5 \\   
\end{array} \right) 
=
\left( \begin{array}{c} 
a_1^2x_2x_3\\
 a_2^2x_3x_1 \\
 a_3^2x_1x_2 +  a_4^2x_4x_5\\
 a_5^2x_5x_3\\
 a_6^2x_3x_4
\end{array} \right). 
\end{equation}
This system was first presented in \cite{GMN}, and its integrals after discretisation were first explored in \cite{PPS}, where the authors present the following three independent integrals of motion
\begin{equation*}
H_1 = a^2_2 x_1^2 - a_1^2 x_2^2,\quad	
H_2 = a^2_3a^2_5x_2^2-a^2_2a^2_5x_3^2+a^2_2a^2_4x_4^2, \quad H_3 = a^2_6 x_4^2 - a^2_5 x_5^2, 	
\end{equation*}
however we report here the existence of a fourth independent integral given by 
\begin{equation*}
	H_4 = \frac {\left( a_{{1}}x_{{2}}+ a_{{2}
		}x_{{1}} \right) ^{{ {a_{{5}}a_{{6}}}}}}{\left(a_{{5}}x_{{5}}+ a_{{6}}x_{{4}}\right)^{a_{{1}}a_{{2}}}},
\end{equation*}
hence the system is super-integrable. To our knowledge, the integral $H_4$ is new. The Jacobian determinant of the Kahan map has the following factors 
\begin{equation*}
J =\frac{K_1\, K_2\, K_3\, K_4\, K_5}{D^6}. \nonumber
\end{equation*} 
The cofactors $C_i=\frac{K_i}{D}$, for $i=1,2,4,5$ admit the following linear Darboux polynomials 
\begin{align*}
p_{1,1} &=  a_{{5}}x_{{5}}+a_{{6}}x_{{4}},
\\ p_{2,1} &= a_{{5}}x_{{5}}-a_{{6}}x_{{4}},
\\ p_{4,1} &= a_{{1}}x_{{2}}+a_{{2}}x_{{1}},
\\ p_{5,1} &= a_{{1}}x_{{2}}-a_{{2}}x_{{1}},
\end{align*}
however, the cofactor $C_3$ admits no polynomial solutions, up to Darboux polynomials of degree $6$. Now we look for quadratic Darboux polynomials with the cofactors $C_6:=C_1C_2$ and $C_7:=C_4C_5$ and get the following
\begin{align}
	p_{6,1} &= p_{1,1}p_{2,1}, &	p_{6,2} &= (2-ha_5a_6x_3)(2+ha_5a_6x_3) ,\label{pCET1}\\
	p_{7,1} &= p_{4,1}p_{5,1}, &	p_{7,2} &=(2-ha_1a_2x_3)(2+ha_1a_2x_3) .\label{pCET2}
\end{align}
We note that $p_{6,2}$ and $p_{7,2}$ also factorise. In the ODE case, if a Darboux polynomial factorises, each factor is also a Darboux polynomial. In the discrete case that need not be the case, and indeed it often is not true. (To our knowledge, this possibility was first raised in Gasull and Manosa \cite{GM}). Here for instance we have that $p_{6,2}$ and $p_{7,2}$ factorize, i.e. 
\begin{equation*}
	p_{6,2} = q_{6,1}q_{6,2},\quad p_{7,2} = q_{7,1}q_{7,2},
\end{equation*}
where the $q_{i,j}$ satisfy
\begin{align*}
	q'_{6,1} &= C_1 q_{6,2}, &	 q'_{6,2} &= C_2 q_{6,1} \\
	q'_{7,1} &= C_4 q_{7,2}, &	q'_{7,2} &= C_5 q_{7,1}
\end{align*}
which implies that each $q_{i,j}$ is in fact a discrete Darboux polynomial of the second iterate of the Kahan map. The Darboux polynomials from equations \eqref{pCET1} and \eqref{pCET2} yield two independent integrals $\frac{p_{6,1}}{p_{6,2}}$ and $\frac{p_{7,1}}{p_{7,2}}$, in agreement with \cite{PPS}. We also note that no (Darboux) polynomial measures are found up to degree $6$ using $J$ as the cofactor.

We now attempt to solve the non-linear cofactor equation 
\begin{equation}\label{cet3}
	p(\mathbf{x}')=	C_3(\mathbf{x};\boldsymbol{\alpha})p(\mathbf{x}).
\end{equation} 
For a polynomial basis of degree $2$, equation \eqref{cet3} 
admits three conditions that yield non-trivial Darboux polynomials: $a_3=0$, $a_4=0$ and  $a_1^2a_2^2=a^2_5a^2_6$. The first two correspond to the decoupling of two of the equations and these two less interesting cases have three independent discrete integrals each. The third condition is presented in \cite{PPS}. In this case the Jacobian determinant of the Kahan discretization now factors as
\begin{equation*}
J =\frac{{K_6}^3{K_7}^3}{D^6}.
\end{equation*}
Using $C_8=\frac{K_6K_7}{D^2}$ as the cofactor, we get the following six Darboux polynomials
\begin{align*}
	p_{8,1} &= a^4_2x_1^2-a^2_5a_6^2x_2^2,\\
	p_{8,2} &= a^2_2x_1x_5-a^2_6x_2x_4,\\
	p_{8,3} &= a^2_2x_1x_4-a^2_5x_2x_5,\\
	p_{8,4} &= a^2_2a^2_4x_4^2-a^2_2a^2_5x_3^2+a^2_3a^2_5x_2^2,\\
	p_{8,5} &= a^2_2a^2_4x_5^2-a^2_2a^2_6x_3^2+a^2_3a^2_6x_2^2,\\
	p_{8,6} &= 4-a^2_5a^2_6x_3^2h^2.
\end{align*}
Hence, the following measures are preserved
\begin{equation*}
\int\frac{d\bx}{p_{8,i}p_{8,j}p_{8,k}},
\quad \mathrm{for~any} \quad i,j,k=1,\dots,6,
\end{equation*}
and the following integrals are preserved
\begin{equation*}
\frac{p_{8,i}}{p_{8,k}},\quad \text{for} \quad i\ne k,
\end{equation*} 
of which four are independent. The choice $k=6$ yields the integrals presented in \cite{PPS}. 
\subsection{Example 10: A family of Nambu systems with rational integrals} 
Here we will consider Nambu systems, of the form 
\begin{equation}\label{nambu}
\dot{\mathbf{x}}=c\left(\nabla H_1 \times \nabla H_2\right), \quad \bx \in \mathbb{R}^3,
\end{equation}
where $c=y^{2-\alpha}$, $H_1 =\frac{x}{y}$, $H_2 =y^{\alpha}Q$, where $\alpha$ is a free parameter. Initially we take $Q$ to be 
\begin{equation*}
Q= a_1x^2+a_2y^2+a_3z^2+a_4xy+a_5yz+a_6zx.
\end{equation*}
We get the following Jacobian determinant for the Kahan map 
\begin{equation*}
	J=\frac{K_1^2K_2}{D^4}.
\end{equation*}
The cofactor $C_1:=K_1/D$ has the following two Darboux polynomials at degree one
\begin{equation*}
	p_{1,1} = x ~~ \text{and} ~~ p_{1,2} = y,
\end{equation*}
hence the integral $H_1$ is preserved exactly by the Kahan method. The cofactor $C_2:=K_2/D^2$ has the following Darboux polynomial
\begin{equation*}
	p_{2,1} = Q.
\end{equation*}
Using $C_3 = J$, we find that the Kahan discretisation has three preserved measures corresponding to the densities
\begin{equation*}
	p_{3,1}=x^2\,Q, ~~p_{3,2}=xy\,Q~~\text{and} ~~ p_{3,3}=y^2Q,
\end{equation*}
which yield only one independent integral, $H_1$. 

  In order to facilitate additional computations, from now on we use fixed integer coefficients $\mathbf{a}= (3,5,7,11,13,17)$ for $Q$, so we can use our detection algorithm to search for any values of the parameter $\alpha$ such that the Kahan discretisation yields extra Darboux polynomial solutions. To do this, we solve the non-linear cofactor equation
\begin{equation*}
p(\mathbf{x}')=|D\phi({\bf x})| p(\mathbf{x})
\end{equation*}
for $\alpha$ and the Darboux polynomials $p$ of degree 4. This gives us the following solutions for $\alpha$ and the corresponding additional second integral of the Kahan discretisation:
\begin{center}
	\begin{tabular}{r|l}
		$\alpha$ & Integrals \\
		\hline
		-2 &  $H_1$ and $H_2$ \\
		-1 &  $H_1$ and $H_2$ \\
		0 & $H_1$ and $\tilde{H}_{2,0}$ \\
		1 & $H_1$ and $\tilde{H}_{2,1}$ \\
		2 & $H_1$ and $\tilde{H}_{2,2}$ \\	\end{tabular}
\end{center}
where 
\begin{align*}
	\tilde{H}_{2,0} &= {\frac {Q}{12+{h}^{2}\left( 1853\,xy+3485\,xz+938\,{y}^{2}+2665\,yz+1435\,{z}^{2} \right)}}, \\
	\tilde{H}_{2,1} &={\frac { yQ}{1- {h}^{2}\left( 226\,{x}^{2}+211\,xy+119\,xz+64\,{y}^{2}+91\,yz
			+49\,{z}^{2} \right) }},
	\\
	\tilde{H}_{2,2} &= \frac {y^2Q }{48-{h}^{2} A_2 - h^4\,A_4},	
\end{align*}
and
\begin{align*}
A_2 &=  26616\,{x}^{2}+23472\,xy+11424\,xz+6840\,{y}^{2}+8736\,yz+
		4704\,{z}^{2}, \\
	A_4 &= 6309873\,{x}^{3} z-10784832\,{x}^{2}yz+ 1918455\,{x}^{2}{z}^{2}+27341015\,x{y}^{3}\\
	&+37337147\,x{y}^{2}z-7467243 \,xy{z}^{2}-559776\,x{z}^{3} +14528513\,{y}^{4}\\
	& +37680292\,{y}^{3}z + 19891900\,{y}^{2}{z}^{2}-428064\,y{z}^{3}-115248\,{z}^{4}.\\
\end{align*}
\section{Concluding remarks}

We have proposed an approach based on the Jacobian factor ansatz to search for the preserved measures and integrals of a birational map and applied it to a number of examples. The approach uses Darboux polynomials. We have shown that the method can be used to both determine and detect measures and integrals. Some of the examples have required the use of relatively large computer memory space and computational time. The complexity of the method is composed of those of the following main steps
\begin{enumerate}
\item Compute the Jacobian determinant of the map that is analysed
\item Factor the Jacobian determinant
\item Calculate the corresponding Darboux polynomial
\end{enumerate}
These three items all have well known complexity measures when there are no free parameters. For maps of $\mathbb{R}^n$ they all typically have complexity $\mathcal{O}(n^3)$. However, the situation becomes more difficult to analyse when there are free parameters involved, also because it is not clear to us exactly which algorithms are used internally in Maple or whether the complexity can be quantified in general. 
Experience shows that the computation time in our experiments can have large variations from case to case.
It is also important to notice that the bottle neck for these computations often seems to be memory usage rather than computational complexity.

 In this paper Examples 1,2,3,6,7,8 could all be computed on a laptop/desktop computer, but Examples 4,9, and 10 needed more power and memory and were solved on a supercomputer.


\section*{Acknowledgements}
This work was partially supported by the Australian Research Council, by the Research Council of Norway, by the European Union's Horizon 2020 research and innovation program under the Marie Sk\l{}odowska-Curie grant agreement No. 691070.  The authors would like to thank the Isaac Newton Institute for Mathematical Sciences, Cambridge, for support and hospitality during the programme Geometry, compatibility and structure preservation in computational differential equations (2019) where work on this paper was undertaken, EPSRC grant EP/K032208/1.  This work was also supported by: EPSRC grant number EP/R014604/1. Celledoni and Quispel are also grateful to the Simons Foundation for Fellowships supporting this work. We are indebted to Giorgio Gubbiotti and Peter van der Kamp for useful discussions.

\section*{Appendix A. Proof of Theorem 2.}
In \cite{quispel91imd} it was shown that $\lfloor \frac{k+1}{2}  \rfloor$ functionally independent integrals of the $(1,k)$ sine-Gordon map (\ref{sG1k}) are given by the trace of the Lax matrix $L^{1,k}$:
\begin{equation}\label{Lax1k}
Tr L^{1,k}({\bf x},\lambda) = Tr \left[ \left(
\begin{array}{cc}
qx_0/x_k & \lambda^{-2}/x_k \\
x_0 & q  \end{array} \right)
\prod_{l=0}^{k-1} \left(
\begin{array}{cc}
p & -x_{l+1} \\
-\lambda^2/x_l & p x_{l+1} /x_l \end{array} \right)
     \right],
\end{equation}
where $pq=\alpha$. The individual integrals are given by the coefficients of the various powers of the spectral parameter $\lambda$ in the expansion of the right-hand side of (\ref{Lax1k}).

  It was also shown in \cite{quispel91imd} that the sine-Gordon map $\phi_k$ is either measure preserving or anti measure preserving, i.e. satisfies
\begin{equation}\label{epsmeas}
P({\bf x}') = \epsilon |D\phi_k({\bf x})| P({\bf x}),
\end{equation}
where $P({\bf x}) := \prod_{l=0}^{k} x_l$, and $\epsilon$ is given by (\ref{define_eps}).

  It is easy to see that the rhs of (\ref{Lax1k}) is equal to
\begin{equation}\label{rhsLax1k}
\begin{aligned}
&\mathrm{Tr} \left[ \frac{1}{x_k} \left( \begin{array}{cc}
qx_0 & \lambda^{-2}  \\
x_0x_k & qx_k \end{array} \right)
\prod_{l=0}^{k-1} \frac{1}{x_l} \left( \begin{array}{cc}
px_l & -x_lx_{l+1}  \\
-\lambda^2 & px_{l+1} \end{array} \right) \right]   \\
&= \mathrm{Tr} \left[ \left( \begin{array}{cc} 
qx_0 & \lambda^{-2} \\
x_0x_k & qx_k \end{array} \right) \prod_{l=0}^{k-1} \left(
\begin{array}{cc}
px_l & -x_lx_{l+1}  \\
-\lambda^2 & px_{l+1} \end{array} \right) \right] / \left[ \prod_{l=0}^{k} x_l  \right].
\end{aligned}
\end{equation}

We now recognize that the denominator of the integrals (\ref{rhsLax1k}) equals the Darboux polynomial $P$ in (\ref{epsmeas}). Bearing in mind that the matrices in the trace in (\ref{rhsLax1k}) are all polynomial, it follows using Theorem 1 that this trace is also a Darboux polynomial with the same cofactor $C = \epsilon |D\phi_k({\bf x})|$, for all values of $\lambda$. \hfill $\Box$


\end{document}